\newcommand{\subsectionruninhead}{\@startsection{subsection}{2}{0mm}
{-\baselineskip}{-0mm}{\bf\large}}
\newcommand{\subsubsectionruninhead}{\@startsection{subsubsection}{3}{0mm}
{-\baselineskip}{-0mm}{\bf\normalsize}}
\newtheorem*{theorem*}{Theorem}
\newtheorem*{proposition*}{Proposition}
\newtheorem*{corollary*}{Corollary}
\newtheorem*{claim*}{Claim}
\newtheorem*{remark*}{Remark}
\newtheorem*{problem*}{Problem}
\newtheorem{theorem}{Theorem}[section]
\newtheorem{corollary}[theorem]{Corollary}
\newtheorem{lemma}[theorem]{Lemma}
\newtheorem{claim}[theorem]{Claim}
\theoremstyle{definition}
\newtheorem{definition}[theorem]{Definition}
\newtheorem{remark}[theorem]{Remark}
\numberwithin{equation}{section}
\def\cF{\mathcal{F}}
\begin{document}
\title{The Cohomological Equation for Jointly Integrable Partially Hyperbolic Diffeomorphisms on 3-Manifolds}

\author{
Wenchao Li
\footnote{W. Li was partially supported by National Key R\&D Program of China (2022YFA 1005801) and NSFC (12161141002).}
\quad and \quad
Yi Shi
\footnote{Y. Shi was partially supported by National Key R\&D Program of China (2021YFA1001900), NSFC (12090015) and and the Institutional Research Fund of Sichuan University (2023SCUNL101).}
}

\maketitle

\begin{abstract}
    For a jointly integrable partially hyperbolic diffeomorphism $f$ on a 3-manifold $M$ with virtually solvable fundamental group which satisfies Diophantine condition along the center foliation, we show that the cohomological equation $\varphi = u\circ f - u + c$ has a continuous solution $u$ if and only if $\varphi$ has trivial periodic cycle functional.
\end{abstract}

\section{Introduction}\label{section: Introduction}
Let $f: M \to M$ be a dynamical system and let $\varphi: M \to \mathbb{R}$ be a function.
We say that $\varphi$ is a {\it coboundary} to some constant $c$ if the following cohomological equation
\begin{align}
    \varphi = u\circ f - u + c \label{fml: cohomological equation}
\end{align}
has some solution $u: M \to \mathbb{R}$. The cohomological equation has been studied by many researchers under different hypotheses on $(M, f, \varphi)$, in a variety of problems, such as smoothness of conjugacies and rigidity of group actions. This paper studies the case where $f$ is a partially hyperbolic diffeomorphism.

Let $M$ be a closed Riemannian manifold. We say that a diffeomorphism $f:M\to M$ is {\it partially hyperbolic} if there exist a continuous $Df$-invariant splitting $TM = E^s\oplus E^c\oplus E^u$ and $k\in\mathbb{Z}^+$, such that for every $x\in M$,
\begin{align*}
    \left\|Df^k|_{E^s(x)}\right\| < \min\left\{1,\ m\left(Df^k|_{E^c(x)}\right)\right\}
     \leq \max\left\{1,\ \left\|Df^k|_{E^c(x)}\right\|\right\} < m\left(Df^k|_{E^u(x)}\right).
\end{align*}
Here $m(A)=\|A^{-1}\|^{-1}$ is the co-norm of a linear operator. By taking an adapted Riemannian metric, we can assume that $k = 1$. We say that $f$ is an {\it Anosov diffeomorphism} if the center subbundle $E^c$ is trivial.

Note that $E^s$, $E^c$ and $E^u$ are 
H\"older continuous, but not $C^1$-smooth in general \cite{PSW1997}. The stable subbundle $E^s$ and the unstable subbundle $E^u$ are uniquely integrable \cite{BP1974,HPS1977}. Both integral foliations $\mathcal{F}^s$ and $\mathcal{F}^u$ has $C^r$-smooth leaves provided that $f$ is $C^r$-smooth.

Given $r \geq 0$, we say that a partially hyperbolic diffeomorphism $f: M \to M$ is {\it $r$-bunched}, if for every $x\in M$,
\begin{align*}
    \left\|Df|_{E^s(x)}\right\| < m\left(Df|_{E^c(x)}\right)^r &\leq
       \left\|Df|_{E^c(x)}\right\|^r < m\left(Df|_{E^u(x)}\right);\\
    \left\|Df|_{E^s(x)}\right\| & < m\left(Df|_{E^c(x)}\right)\cdot\left\|Df|_{E^c(x)}\right\|^{-r};\\
    m\left(Df|_{E^u(x)}\right) & > \left\|Df|_{E^c(x)}\right\|\cdot m\left(Df|_{E^c(x)}\right)^{-r}.
\end{align*}
If $f: M\to M$ further satisfies
\begin{align*}
    \max\left\{\left\|Df|_{E^s(x)}\right\|, m\left(Df|_{E^u(x)}\right)^{-1}\right\} < 
    \min\left\{ m\left(Df|_{E^c(x)}\right)^r, \left\|Df|_{E^c(x)}\right\|^{-r} \right\},
\end{align*}
then we say that $f$ is {\it strongly $r$-bunched}.

Note that partially hyperbolic diffeomorphisms are always strongly $r$-bunched for some $r > 0$. When $\dim E^c = 1$, we have further that $f$ is $r$-bunched for some $r > 1$. 

We say that a partially hyperbolic diffeomorphism $f:M\to M$ with invariant splitting $TM=E^s\oplus E^c\oplus E^u$ is  {\it dynamically coherent} if both invariant bundles $E^{sc}=E^s\oplus E^c$ and $E^{cu}=E^c\oplus E^u$ admit $f$-invariant integral foliations $\mathcal{F}^{sc}$ and $\mathcal{F}^{cu}$ respectively. If $f$ is dynamically coherent, then $\mathcal{F}^c=\mathcal{F}^{sc}\cap\mathcal{F}^{cu}$ is an $f$-invariant foliation tangent to $E^c$ everywhere.

If $f$ is a $C^r$-smooth dynamically coherent partially hyperbolic diffeomorphism which is also $r$-bunched for some $r > 1$, then the integral foliations $\mathcal{F}^{sc}$, $\mathcal{F}^{cu}$ and $\mathcal{F}^c$ have $C^r$-smooth leaves \cite{HPS1977}.

\vskip3mm

The study of cohomological equations for Anosov diffeomorphisms originates to the seminal work of Liv\v sic \cite{Livsic1971,Livsic1972}. Liv\v sic introduces an obstruction for the existence of a continuous solution to the cohomological equation (\ref{fml: cohomological equation}): {\it 
	constant periodic data}. Specifically, $f$ is said to have constant periodic data, if there exists a constant $c\in\mathbb{R}$ such that
$$\frac{1}{|\text{Orb}(x)|}\sum_{y\in\text{Orb}(x)}\varphi(y) = c,\ \forall x\in\text{Per}(f).$$
It is clear that constant periodic data is a necessary condition for the equation (\ref{fml: cohomological equation}) to admit a continuous solution. Liv\v sic shows that it is also a sufficient condition for Anosov systems. The Liv\v sic-type theorems for matrix cocycles and diffeomorphism cocycles over Anosov systems or hyperbolic systems has been widely studied in \cite{Kalinin2011,KP2016,AKL2018}. 

\begin{theorem}\label{intro thm: Livsic}
    {\rm (\cite{Livsic1971,Livsic1972,LS1972,delaLlave1997,Journe1988})} Let $f: M \to M$ be a transitive Anosov diffeomorphism and $\varphi: M \to \mathbb{R}$ be a H\"older continuous function.
    \begin{enumerate}[{\bf I.}]
        \item {\bf Existence of solutions.} The cohomological equation (\ref{fml: cohomological equation}) has a continuous solution $u: M \to \mathbb{R}$ for some $c\in\mathbb{R}$, if and only if $\varphi$ has constant periodic data.
        \item {\bf H\"older regularity of solutions.} Every continuous solution is H\"older continuous.
        \item {\bf Higher regularity of solutions.} If $\varphi$ is $C^1$-smooth, then every continuous solution is $C^1$-smooth. If $f$ and $\varphi$ are both $C^r$-smooth, where $r > 1$ is not an integer, then every continuous solution is $C^r$-smooth. If $f$ and $\varphi$ are both real analytic, then every continuous solution is real analytic.
    \end{enumerate}
\end{theorem}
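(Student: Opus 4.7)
The necessity direction in part \textbf{I} is clear: summing the equation $\varphi = u\circ f - u + c$ along a periodic orbit of period $n$ yields $\sum_{k=0}^{n-1}\varphi(f^k(x)) = nc$. For sufficiency, the plan is to exploit transitivity. Fix $x_0 \in M$ with dense forward orbit, let $c$ be the common value furnished by the constant periodic data hypothesis, and define a candidate solution along the orbit by
$$u\bigl(f^n(x_0)\bigr) := \sum_{k=0}^{n-1} \bigl(\varphi(f^k(x_0)) - c\bigr), \qquad u(x_0) = 0.$$
The heart of the argument is to show that $u$ is uniformly H\"older continuous on $\{f^n(x_0)\}_{n\in\NN}$, so that it extends continuously to $M$ and automatically solves (\ref{fml: cohomological equation}). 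For this I would invoke the Anosov closing lemma: whenever $d(f^n(x_0), x_0)$ is small, there exists a periodic point $p$ of period $n$ whose orbit exponentially shadows $x_0, f(x_0), \ldots, f^{n-1}(x_0)$. Using the H\"older regularity of $\varphi$ and the fact that $\sum_{k=0}^{n-1}\varphi(f^k(p)) = nc$ by the constant periodic data hypothesis, the two Birkhoff sums along $x_0$ and $p$ differ by at most $C\, d(f^n(x_0), x_0)^\alpha$, giving the desired H\"older bound. A similar shadowing argument controls the difference $u(f^n(x_0)) - u(f^m(x_0))$ whenever those two points are close.

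Part \textbf{II} is then an immediate byproduct: any continuous solution is unique up to constant by transitivity, and the above construction has already produced a H\"older solution. For part \textbf{III}, the plan is to first establish smoothness along stable and unstable leaves and then glue. On a stable leaf through $x$, the series
$$v^s(y) := \sum_{n=0}^{\infty} \bigl(\varphi(f^n(y)) - \varphi(f^n(x))\bigr)$$
converges absolutely because $f$ contracts $\mathcal{F}^s$ exponentially and $\varphi$ is H\"older; moreover $v^s$ satisfies the cohomological equation along $\mathcal{F}^s$, so any continuous solution of (\ref{fml: cohomological equation}) must agree with $v^s$ up to a constant on each stable leaf. If $\varphi$ is $C^r$-smooth, termwise differentiation using exponential contraction shows $v^s$ is $C^r$ along $\mathcal{F}^s$. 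An analogous argument using $f^{-1}$ gives $C^r$-smoothness along $\mathcal{F}^u$. The final step applies Journ\'e's lemma: a continuous function that is $C^r$ along two transverse H\"older foliations with $C^r$ leaves is $C^r$ on $M$. The real analytic case requires de la Llave's analytic refinement of this gluing.

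The main obstacle is the higher regularity step. Termwise differentiation of $v^s$ up to order $r$ is rather clean in the Anosov setting because there is no center direction to worry about, but passing through Journ\'e's lemma requires delicate treatment at integer values of $r$, where the usual H\"older formulation does not directly apply, and a genuinely new argument in the real analytic case, where one must control analytic extensions of $v^s$ off the leaves into a complex neighborhood. These boundary cases are where the bulk of the technical work lies.
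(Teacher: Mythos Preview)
The paper does not contain a proof of this theorem. Theorem~\ref{intro thm: Livsic} is stated as background material and attributed to the cited references \cite{Livsic1971,Livsic1972,LS1972,delaLlave1997,Journe1988}; the paper's own contributions begin with Theorem~\ref{intro thm: PCF and PD} and concern partially hyperbolic systems, not the classical Anosov Liv\v sic theorem.

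That said, your outline is the standard route taken in the cited literature: the Anosov closing lemma argument for existence and H\"older regularity is exactly Liv\v sic's original proof, and the leafwise-smoothness-plus-Journ\'e strategy for higher regularity is the approach of \cite{LS1972,Journe1988}, with \cite{delaLlave1997} handling the real analytic case. One small point: in your sketch of part \textbf{III} you should be careful that termwise differentiation of $v^s$ along $\mathcal{F}^s$ requires not just exponential contraction of distances but control of the derivatives of $f^n$ along the leaves, which in the Anosov setting comes for free from uniform hyperbolicity; and Journ\'e's lemma needs uniform $C^r$ bounds on families of leafwise restrictions, not just leaf-by-leaf smoothness. These are routine to supply here, but they are the places where a full write-up would need detail.
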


While a transitive Anosov diffeomorphism has
a dense set of periodic orbits, a transitive partially hyperbolic diffeomorphism might
have {\it no} periodic orbits. Therefore, having constant periodic data might not be enough to solve the cohomological equation. Katok-Kononenko \cite{KK1996} introduced a new obstruction: {\it trivial periodic cycle functional} to solve the cohomological equations for partially hyperbolic diffeomorphisms. For accessible partially hyperbolic diffeomorphisms, Wilkinson \cite{Wilkinson2013} showed that trivial periodic cycle functional is also a necessary and sufficient condition for the cohomological equation (\ref{fml: cohomological equation}) to admit a continuous solution.

An {\it accessible sequence} of length $k$ is a sequence $\gamma = (\gamma_{x_0}^{x_1}, \cdots, \gamma_{x_{k - 1}}^{x_k})$ consisting of paths $\gamma_{x_i}^{x_{i + 1}}$ lying in a single leaf of $\mathcal{F}^s$ or a single leaf of $\mathcal{F}^u$, with endpoints $x_i$ and $x_{i + 1}$, $0\leq i \leq k - 1$. If $x_0 = x_k = x$, then $\gamma$ is called a {\it periodic cycle} \cite{KK1996} or an {\it accessible cycle} \cite{Wilkinson2013} at $x$. It is clear that the length of a periodic cycle is always even and larger or equal than 2. 

For a path $\gamma_x^y$ lying in a single leaf of $\mathcal{F}^s$, define
$$
PCF_{\gamma_x^y}(\varphi, f) := -\sum_{k = 0}^{+\infty}\left(\varphi(f^k(y)) - \varphi(f^k(x))\right).
$$
For a path $\gamma_x^y$ lying in a single leaf of $\mathcal{F}^u$, define
$$
PCF_{\gamma_x^y}(\varphi, f) := \sum_{k = 1}^{+\infty}\left(\varphi(f^{-k}(y)) - \varphi(f^{-k}(x))\right).
$$
For an accessible sequence $\gamma = (\gamma_{x_0}^{x_1}, \cdots, \gamma_{x_{k - 1}}^{x_k})$, define
$$
PCF_\gamma(\varphi, f) : =\sum_{i = 0}^{k - 1}PCF_{\gamma_{x_i}^{x_{i + 1}}}(\varphi, f).
$$

\begin{definition}
	Let $f:M\to M$ be a partially hyperbolic diffeomorphism and let $\varphi:M\to\mathbb{R}$ be a H\"older continuous function. The function $\varphi$ is said to have {\it trivial periodic cycle functional}, if $PCF_\gamma(\varphi, f) = 0$ for any periodic cycle $\gamma$.
\end{definition}

Note that the periodic cycle functional is well-defined by the H\"older regularity of $\varphi$. We also note that the periodic cycle functional can be defined on the universal cover. Let $F:\widetilde{M}\to\widetilde{M}$ be a lift of $f$, and denote $PCF_\gamma(\varphi, F)$ the periodic cycle functional with respect to the lifted foliations $\widetilde{\mathcal{F}}^s$, $\widetilde{\mathcal{F}}^u$, and the lifted function of $\varphi$, which we still denote by $\varphi$.

A partially hyperbolic diffeomorphism $f: M \to M$ is {\it accessible}, if for any $x$, $y\in M$, there exists an accessible sequence $\gamma = (\gamma_{x_0}^{x_1}, \cdots, \gamma_{x_{k - 1}}^{x_k})$ with $x_0 = x$ and $x_k = y$.

\begin{theorem}\label{intro thm: accessible PCF}
    {\rm (\cite{KK1996,Wilkinson2013})} Let $f: M \to M$ be an accessible partially hyperbolic diffeomorphism, and $\varphi: M \to \mathbb{R}$ be a H\"older continuous function.
    \begin{enumerate}[{\bf I.}]
        \item {\bf Existence of solutions.} The cohomological equation (\ref{fml: cohomological equation}) has a continuous solution $u: M \to \mathbb{R}$ for some $c\in\mathbb{R}$, if and only if $\varphi$ has trivial periodic cycle functional.
        \item {\bf H\"older regularity of solutions.} Every continuous solution is H\"older continuous.
        \item {\bf Higher regularity of solutions.} If $f$ and $\varphi$ are both $C^k$-smooth for some integer $k \geq 2$ and $f$ is strongly $r$-bunched for some $r < k - 1$ or $r = 1$, then every continuous solution is $C^r$-smooth.
    \end{enumerate}
\end{theorem}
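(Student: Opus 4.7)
The plan is to follow the Katok--Kononenko--Wilkinson blueprint. For the \emph{necessity} direction of Part I, I would use a telescoping argument: if $u$ is a continuous solution and $\gamma_x^y$ lies in a single stable leaf, then uniform continuity of $u$ on the compact manifold together with exponential contraction of $d(f^k(x),f^k(y))$ will give
\begin{align*}
\sum_{k=0}^{N}\bigl(\varphi(f^k(y))-\varphi(f^k(x))\bigr)
& = \bigl(u(f^{N+1}(y)) - u(f^{N+1}(x))\bigr) - \bigl(u(y) - u(x)\bigr) \\
& \longrightarrow u(x) - u(y),
\end{align*}
hence $PCF_{\gamma_x^y}(\varphi, f) = u(y) - u(x)$; the unstable-leaf case is analogous with backward iteration. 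Summing around any periodic cycle then telescopes to $0$.

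For \emph{sufficiency}, I would fix a base point $p \in M$ and set $u(x):=PCF_{\gamma_p^x}(\varphi, f)$ for \emph{any} accessible sequence $\gamma_p^x$ from $p$ to $x$, which exists by accessibility. Independence of the choice of $\gamma_p^x$ follows immediately from triviality of $PCF$ on periodic cycles, since the concatenation of two such choices with one reversed is a periodic cycle at $p$. Verifying $u\circ f - u = \varphi - c$ then reduces to comparing $PCF$ along an accessible sequence from $p$ to $x$ with one from $p$ to $f(x)$; the discrepancy produces the constant $c$, which equals a $PCF$ around a suitable cycle through the $f$-orbit of $p$.

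The hard part will be proving that the function $u$ just defined is \emph{continuous}, since a priori it is only set-theoretically defined via arbitrary choices. My plan is to exploit the local $su$-product structure of $\mathcal{F}^s$ and $\mathcal{F}^u$: for any $x_0 \in M$ and any $\varepsilon > 0$ there should be a neighborhood $U$ of $x_0$ so that every $x \in U$ is joined to $x_0$ by an accessible sequence of \emph{uniformly bounded length} whose legs each have length at most $\varepsilon$. H\"older regularity of $\varphi$ combined with exponential contraction rates then yields the uniform estimate
$$\bigl|PCF_{\gamma_x^y}(\varphi, f)\bigr| \leq C\,d_{\mathcal{F}^*}(x,y)^\alpha$$
for stable- and unstable-leaf paths, where $\alpha$ is the H\"older exponent of $\varphi$ and $d_{\mathcal{F}^*}$ is leafwise distance. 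Summing over a short $su$-sequence from $x_0$ to $x$ then gives $|u(x)-u(x_0)|\to 0$. The same estimate will also show that $u$ is uniformly H\"older along each individual strong leaf, setting up Part II.

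For Part II I would invoke a Jour\'e-type lemma: once $u$ is continuous on $M$ and uniformly H\"older along each of $\mathcal{F}^s$, $\mathcal{F}^u$ with uniform constants, Jour\'e regularity (in conjunction with accessibility to propagate across the missing center direction) delivers global H\"older regularity. For Part III, the strong $r$-bunching makes $\mathcal{F}^s$ and $\mathcal{F}^u$ have $C^r$-smooth leaves and justifies term-by-term differentiation of the $PCF$ series in leaf coordinates, yielding $C^r$-smoothness of $u$ along strong leaves; a Jour\'e-type theorem combined with the cohomological equation (which transports smoothness in the $f$-direction) then promotes this to $C^r$-smoothness of $u$ globally. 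The critical case $r = 1$ will need a Hirsch--Pugh--Shub invariant section argument to compensate for the loss of one derivative intrinsic to leafwise differentiation.
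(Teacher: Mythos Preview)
The paper does not prove this theorem at all: it is stated with the attribution \cite{KK1996,Wilkinson2013} and used as a black-box background result. There is therefore no ``paper's own proof'' to compare your proposal against. That said, your sketch contains a genuine gap that is worth naming, since it is exactly the difficulty that makes Wilkinson's paper nontrivial.

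Your continuity argument for $u$ appeals to ``the local $su$-product structure of $\mathcal{F}^s$ and $\mathcal{F}^u$'', asserting that nearby points are joined by accessible sequences with a uniformly bounded number of short legs. For an Anosov diffeomorphism this is true, but for a partially hyperbolic system with nontrivial center it is \emph{false} in general: $E^s\oplus E^u$ is a proper subbundle, so $\mathcal{F}^s$ and $\mathcal{F}^u$ do not give a local product structure, and two points that are $\varepsilon$-close in $M$ may lie on different center leaves and require arbitrarily long $su$-paths to connect. Accessibility only guarantees the existence of \emph{some} $su$-path, with no a priori control on the number or length of the legs. Establishing the needed uniformity is the core of Wilkinson's argument and requires substantial work (a careful analysis of the regularity of accessible sequences, use of center holonomy, and a saturated-section/measurable-rigidity argument); it cannot be read off from the foliation geometry as you suggest.

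The same issue undermines your use of Journ\'e for Parts II and III: Journ\'e's lemma requires two transverse foliations whose tangent spaces \emph{span} $TM$, but here $E^s\oplus E^u\subsetneq TM$. Regularity along $\mathcal{F}^s$ and $\mathcal{F}^u$ alone does not yield regularity on $M$ without separately controlling the center direction, which is again where the strong bunching hypothesis and Wilkinson's holonomy estimates enter. Your outline is essentially the Anosov/local-product-structure proof; the genuinely partially hyperbolic case needs the additional machinery of \cite{Wilkinson2013}.
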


\begin{remark}
	By the local product structure of stable and unstable foliations, an Anosov diffeomorphism is always locally accessible, i.e. for an Anosov diffeomorphism $f:M\to M$ and every pair of points $x$, $y$ which are close in $M$, there exists a local path inside stable and unstable leaves connecting $x$ and $y$. So for an Anosov diffeomorphism, \cite{KK1996} (see also \cite{Gogolev2023}) shows that trivial periodic cycle functional is a necessary and sufficient condition for the cohomological equation (\ref{fml: cohomological equation}) to admit a continuous and smooth solution.
\end{remark}

Our first result is an refinement of solving cohomological equations for transitive Anosov diffeomorphisms via periodic cycle functionals. 

\begin{theorem}\label{intro thm: PCF and PD}
	Let $f: M \to M$ be a transitive Anosov diffeomorphism and let $\varphi: M \to \mathbb{R}$ be a H\"older continuous function. If $PCF_\gamma(\varphi, f) = 0$ for any periodic cycle $\gamma$ with length 2, then $\varphi$ has constant periodic data. 
	Thus cohomological equations (\ref{fml: cohomological equation}) have solutions satisfying properties as Theorem \ref{intro thm: Livsic}.
\end{theorem}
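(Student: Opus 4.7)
The plan is to show that $\varphi$ has constant periodic data; the cohomological equation and the asserted regularity properties then follow immediately from Theorem~\ref{intro thm: Livsic}. Let $p, q \in \per(f)$ be arbitrary periodic points of periods $n$ and $m$, and write $c_r := |\orb(r)|^{-1}\sum_{y \in \orb(r)}\varphi(y)$. I will show $c_p = c_q$ by evaluating $PCF$ on a single, well-chosen, length-$2$ periodic cycle at $p$.

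By transitivity of $f$, stable and unstable manifolds of any two periodic points intersect, so I pick heteroclinic points $z_1 \in \mathcal{F}^s(p)\cap\mathcal{F}^u(q)$ and $z_2 \in \mathcal{F}^s(q)\cap\mathcal{F}^u(p)$. Applying the Smale--Birkhoff theorem to the heteroclinic cycle $p \to q \to p$ through $z_1, z_2$ yields a hyperbolic basic set $\Lambda$ containing the orbits of $p, q, z_1, z_2$, on which $f$ is topologically conjugate to a subshift of finite type. Using this coding (or equivalently the Anosov shadowing lemma for the natural pseudo-orbit), for every positive integer $M$ with $n \mid Mm$, I construct a homoclinic point $y_M \in \mathcal{F}^s(p) \cap \mathcal{F}^u(p)$ whose orbit has the following \emph{figure-eight shadowing profile}: $\{f^k y_M\}_{k \in \mathbb{Z}}$ stays exponentially close to the orbit of $p$ for $k \leq -K_M$, transitions through $z_2$ in a window whose length is bounded independently of $M$, shadows the orbit of $q$ in phase for exactly $Mm$ consecutive iterates, transitions back through $z_1$ in a similarly bounded window, and shadows $p$ again for all $k$ sufficiently large. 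Within each stage the shadowing distance decays geometrically away from the two transition windows.

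Next I evaluate the periodic cycle functional of the length-$2$ cycle $\gamma_M = (\gamma_p^{y_M}, \gamma_{y_M}^p)$, with stable first leg and unstable second leg; unwinding the definitions,
\begin{equation*}
PCF_{\gamma_M}(\varphi, f) = \sum_{k \in \mathbb{Z}} \bigl(\varphi(f^k p) - \varphi(f^k y_M)\bigr).
\end{equation*}
Split this sum into the two $p$-tails, the two transition windows, and the central $q$-block of length $Mm$. H\"older regularity of $\varphi$ together with summable exponential decay of shadowing distances bounds each tail and each transition by $O(1)$ uniformly in $M$. Within the central block $f^k y_M$ is close to $f^{k+s(M)}q$ for some integer offset $s(M)$ determined by the phase, with shadowing distance decaying exponentially from both ends; replacing $\varphi(f^k y_M)$ by $\varphi(f^{k+s(M)}q)$ therefore contributes only $O(1)$ in total. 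What remains is
\begin{equation*}
\sum_{j=0}^{Mm - 1} \bigl(\varphi(f^j p) - \varphi(f^{j+s(M)}q)\bigr) = Mm\,(c_p - c_q),
\end{equation*}
where the arithmetic choice $n \mid Mm$ (together with $m \mid Mm$, which is automatic) lets us evaluate the two Birkhoff sums exactly as integer multiples of $c_p$ and $c_q$. Combining, $PCF_{\gamma_M}(\varphi, f) = Mm\,(c_p - c_q) + O(1)$. The hypothesis forces the left-hand side to vanish for every admissible $M$, so dividing by $Mm$ and letting $M \to \infty$ yields $c_p = c_q$. Since $p, q$ were arbitrary periodic points, $\varphi$ has constant periodic data.

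The main obstacle is the simultaneous construction of $y_M$ with the prescribed profile \emph{and} the uniform-in-$M$ control of the transition and tail contributions. Both rely on the exponential local product structure on the hyperbolic basic set $\Lambda$: the transitions must have length bounded independently of $M$, and the two-sided exponential decay of shadowing within the central $q$-block is what keeps the H\"older errors bounded as the block grows. These estimates are standard outputs of hyperbolic symbolic dynamics (a Markov partition of $\Lambda$) and the Anosov closing lemma, but executing the quantitative bookkeeping cleanly is the technical heart of the argument.
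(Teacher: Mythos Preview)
Your argument is correct and follows the same strategy as the paper: build a homoclinic point to $p$ whose orbit shadows the orbit of $q$ for a long central block, evaluate the length-$2$ periodic cycle functional through it, and observe that it grows linearly in the block length unless $c_p=c_q$. The only differences are cosmetic---the paper constructs the homoclinic point directly from the local product structure (taking $z_n\in\mathcal{F}^s(f^{-k_n\pi_p\pi_q}(a))\pitchfork\mathcal{F}^u(b)$ for heteroclinic points $a,b$) rather than invoking Smale--Birkhoff or a Markov partition, and phrases the endgame as a contradiction via neighborhoods $U_p,U_q$ instead of an explicit $O(1)$ remainder.
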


For cohomological equations of partially hyperbolic diffeomorphisms, we studied the non-accessible systems.
We say that a partially hyperbolic diffeomorphism $f$ is {\it jointly integrable} if $E^s\oplus E^u$ is integrable, with an integral foliation $\mathcal{F}^{su}$ subfoliated by $\cF^s$ and $\cF^u$. For example, \cite{Correa2023} studies toral automorphisms and their jointly integrable perturbations.

From the topological classification results in \cite{Hammerlindl2017,HP2015,HS2021}, a jointly integrable partially hyperbolic diffeomorphism $f$ on a 3-manifold $M$ with virtually solvable fundamental group must be either of the following two systems:

\begin{itemize}
    \item (DA-systems) $M = \mathbb{T}^3$ and $f$ is {\it derived-from-Anosov}, in the sense that the linear part $f_*: \pi_1(\mathbb{T}^3) \to \pi_1(\mathbb{T}^3)$ of $f$ induces a hyperbolic toral automorphism.
    \item (AB-systems) $M = M_B :=\mathbb{T}^2\times\mathbb{R}/((x, t + 1)\sim(Bx, t))$ and $f$ is topologically conjugate to $A_\alpha$ defined by $A_\alpha(x, t) := (Ax, t + \alpha)$, where $A$, $B\in\text{Aut}(\mathbb{T}^2)$, $AB = BA$, $A$ is hyperbolic and $\alpha\in\mathbb{R}$ is the central rotation number of $f$.
\end{itemize}

For jointly integrable DA-systems, we prove the following result.

\begin{theorem}\label{intro thm: PHDA}
    Let $f: \mathbb{T}^3 \to \mathbb{T}^3$ be a jointly integrable partially hyperbolic DA-system of class $C^2$, and let $\varphi: \mathbb{T}^3 \to \mathbb{R}$ be a function of class $C^{1+}$.
    \begin{enumerate}[{\bf I.}]
        \item {\bf Existence of solutions.} The cohomological equation (\ref{fml: cohomological equation}) has a continuous solution $u: \mathbb{T}^3 \to \mathbb{R}$ for some $c\in\mathbb{R}$, if and only if $\varphi$ has trivial periodic cycle functional.
        \item {\bf H\"older regularity of solutions.} Every continuous solution is H\"older continuous.
        \item {\bf Higher regularity of solutions.} If $f$ and $\varphi$ are both $C^k$-smooth for some $k > 2$, and $f$ is $k$-bunched and strongly $r$-bunched for some non-integer $r < k - 1$, then every continuous solution is $C^r$-smooth.
    \end{enumerate}
\end{theorem}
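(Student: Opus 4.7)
Necessity of trivial PCF is routine telescoping: if $\varphi = u\circ f - u + c$ with $u$ continuous, then each infinite series defining $PCF_{\gamma_x^y}$ along a stable or unstable leg collapses to $u(y) - u(x)$, and summing over a closed accessible cycle yields zero. For sufficiency, my plan is to lift everything to the universal cover $F:\mathbb{R}^3\to\mathbb{R}^3$ with lifted foliations $\widetilde{\mathcal F}^{s},\widetilde{\mathcal F}^{u},\widetilde{\mathcal F}^{c},\widetilde{\mathcal F}^{su}$ and lifted function $\widetilde\varphi$, construct $\tilde u$ leafwise on every $\widetilde{\mathcal F}^{su}$-leaf, and extend it across leaves using the DA-rigidity; $\pi_1$-equivariance of the intrinsic construction will then allow $\tilde u$ to descend to $u$ on $\mathbb{T}^3$.

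\textbf{Leafwise construction.} Joint integrability identifies every accessibility class with a single $\widetilde{\mathcal F}^{su}$-leaf, so on each such leaf $L$ one has full $(\mathcal F^s,\mathcal F^u)$-accessibility. Fixing a basepoint $\tilde x_0\in L$, I would define $\tilde u(\tilde x) := PCF_\gamma(\widetilde\varphi, F)$ for any intra-leaf accessible sequence $\gamma$ from $\tilde x_0$ to $\tilde x$. The trivial-PCF hypothesis, pulled back to the cover, together with the Wilkinson-type leafwise argument from Theorem~\ref{intro thm: accessible PCF}, makes this well-defined and continuous on $L$, H\"older along $\mathcal F^s$- and $\mathcal F^u$-legs, and a solution of the cohomological equation on $L$ modulo a leaf-dependent constant.

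\textbf{Cross-leaf extension.} The crucial step --- absent in the accessible setting of \cite{Wilkinson2013} --- is to extend $\tilde u$ continuously in the transverse center direction. Here the DA-structure is decisive: the linear part $f_*$ is a hyperbolic toral automorphism, and the topological classification of \cite{Hammerlindl2017,HP2015} furnishes enough rigidity on jointly integrable DA-systems to transfer center structure between $f$ and its linear model, whose center eigenvalue has modulus $\neq 1$. For $\tilde x,\tilde y$ on a common $\widetilde{\mathcal F}^c$-arc, I would set
\[
\tilde u(\tilde y) - \tilde u(\tilde x) := -\sum_{k=0}^{\infty}\bigl(\widetilde\varphi(F^k\tilde y) - \widetilde\varphi(F^k\tilde x)\bigr)
\]
when the center is dynamically expanded (and symmetrically with $F^{-1}$ in the contracted case), exploiting the fact that the topological conjugacy on $\widetilde{\mathcal F}^{su}$-leaves transports the linear center dilation to geometric estimates for $F$. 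The $C^{1+}$-hypothesis on $\varphi$ and the $C^{1+}$-smoothness of $\widetilde{\mathcal F}^c$ (from $C^2$-smoothness and bunching of $f$) are what force absolute convergence of the series with a H\"older-in-center-distance bound. Compatibility of this center extension with the leafwise PCF definition --- equivalently, vanishing of PCF on mixed $s$-$u$-$c$ loops --- is where the trivial-PCF hypothesis is used a second time, via a Franks--Manning-type approximation of center arcs by long accessible sequences provided by the DA rigidity.

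\textbf{Regularity and main obstacle.} Part~II follows from Liv\v sic-type H\"older estimates along $\mathcal F^s,\mathcal F^u$ plus the explicit center series giving H\"older regularity along $\mathcal F^c$, combined with Journ\'e's regularity lemma to upgrade leafwise H\"older to global H\"older. For Part~III, the $k$-bunching of $f$ provides $C^k$-smooth invariant foliations, and the standard Liv\v sic--de~la~Llave bootstrap gives $C^r$-smoothness of $u$ along $\mathcal F^s$ and $\mathcal F^u$ for $r<k-1$; under the stated strong $r$-bunching the center series yields $C^r$-smoothness along $\mathcal F^c$; a second application of Journ\'e's lemma then delivers global $C^r$. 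The principal obstacle throughout is the cross-leaf extension in Part~I: because the accessibility classes are codimension-one, one genuinely pays a cost on center arcs, and the delicate part is making the center extension land exactly on the leafwise extension rather than differing by a nontrivial center cocycle. This is precisely where the combination of trivial PCF with the jointly integrable DA-rigidity enters in an essential way.
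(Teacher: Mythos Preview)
Your center-series construction does yield a solution $\tilde u$ of the lifted equation on $\mathbb{R}^3$ (once you fix the sign: the forward sum $-\sum_{k\ge0}(\widetilde\varphi(F^k\tilde y)-\widetilde\varphi(F^k\tilde x))$ converges when the center is \emph{contracting}, not expanding). The genuine gap is the descent to $\mathbb{T}^3$. Your claim of ``$\pi_1$-equivariance of the intrinsic construction'' fails because the construction pins down a reference leaf $L_0=\widetilde{\mathcal F}^{su}(\tilde x_0)$, and the deck group preserves no $su$-leaf (indeed $\widetilde{\mathcal L}^{su}(0)\cap\mathbb{Z}^3=\{0\}$ since $A$ is irreducible). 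Unwinding, $\mathbb{Z}^3$-periodicity of $\tilde u$ becomes the identity
\[
PCF_a^{\,b}(\widetilde\varphi,F)\;=\;-\sum_{k\ge0}\bigl(\widetilde\varphi(F^k b)-\widetilde\varphi(F^k(a+n))\bigr)
\]
for every $a\in L_0$, $n\in\mathbb{Z}^3$, and $b=\widetilde{\mathcal F}^c(a+n)\cap L_0$. This is \emph{not} a consequence of trivial PH-PCF: that hypothesis only constrains cycles confined to a single $su$-leaf, whereas the identity above compares data across distinct leaves. Your proposed ``Franks--Manning-type approximation of center arcs by long accessible sequences'' cannot close it either --- an accessible sequence never leaves its $su$-leaf, while a center arc leaves every $su$-leaf instantly.

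The paper closes exactly this gap by a mechanism your outline never touches. It reverses your order of construction --- solve first on the fixed center leaf $\widetilde{\mathcal F}^c(0)$, then extend across each $su$-leaf by PCF via~(\ref{fml: extension}) --- and transfers through the conjugacy $H$ (which is $C^{1+}$ along $\mathcal F^c$ by Theorem~\ref{thm: PHDA spectrum rigidity}) to the linear model. Descent then becomes a cohomological equation $\Psi(n,x)=w\circ T_n(x)-w(x)+c(n)$ for the $\mathbb{Z}^3$-translation action $T_n(x)=x+n\cdot\alpha$ on $\widetilde{\mathcal L}^c(0)\cong\mathbb{R}$, where $\alpha_2,\alpha_3$ are algebraic irrationals (Lemma~\ref{lem: T_n}). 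This is solved by Fourier series, with Roth's theorem (Lemma~\ref{lem: algebraic diophantine}) controlling the small divisors (Theorem~\ref{thm: equation for Tn}); the homomorphism defect $c(n)$ is then forced to vanish by the growth estimate $\|\Psi(n,\cdot)\|=O(\log|n|)$ (Claim~\ref{clm: cn=0}). It is this Diophantine/small-divisor argument --- not the center hyperbolicity you invoke --- that bridges trivial PH-PCF to solvability on $\mathbb{T}^3$.
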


We note that the loss of regularity occurs only in the central direction. If $\varphi$ is only H\"older continuous, but $C^{1+}$-smooth along central leaves, then we still have a H\"older continuous solution.
Since a $C^2$-smooth paritally hyperbolic DA-system $f:\mathbb{T}^3\to\mathbb{T}^3$ is either accessible or jointly integrable \cite{HU2014,HS2021}, we have the following corollary from Theorem \ref{intro thm: accessible PCF} and Theorem \ref{intro thm: PHDA}.

\begin{corollary}\label{intro cor: PHDA}
    Let $f: \mathbb{T}^3 \to \mathbb{T}^3$ be a $C^2$-smooth partially hyperbolic DA-system, and let $\varphi: \mathbb{T}^3 \to \mathbb{R}$ be a $C^{1+}$-smooth function. Then the cohomological equation (\ref{fml: cohomological equation}) has a continuous solution $u: \mathbb{T}^3 \to \mathbb{R}$, if and only if $\varphi$ has trivial periodic cycle functional.
\end{corollary}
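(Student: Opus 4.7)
The plan is that this corollary is essentially a direct combination of Theorem \ref{intro thm: accessible PCF} and Theorem \ref{intro thm: PHDA} via the dichotomy cited from \cite{HU2014,HS2021}: every $C^2$-smooth partially hyperbolic DA-system $f:\mathbb{T}^3\to\mathbb{T}^3$ is either accessible or jointly integrable. So the proof reduces to handling each branch of this dichotomy separately and verifying the hypotheses of the relevant theorem.

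First I would dispatch the ``only if'' direction, which does not use the dichotomy at all. Assuming a continuous solution $u$ exists, the definitions of $PCF_{\gamma_x^y}(\varphi,f)$ along stable and unstable paths become telescoping sums: along a stable path, the sum $\sum_{k\ge 0}\bigl(\varphi(f^k(y))-\varphi(f^k(x))\bigr) = \sum_{k\ge 0}\bigl((u(f^{k+1}y)-u(f^{k+1}x))-(u(f^k y)-u(f^k x))\bigr) = u(x)-u(y)$ by continuity of $u$ and exponential contraction, and similarly along unstable paths one gets $u(y)-u(x)$. Adding these around any closed accessible cycle yields zero, so $\varphi$ has trivial periodic cycle functional. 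This only requires $\varphi$ Hölder continuous, which is implied by $C^{1+}$.

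For the ``if'' direction, assume $\varphi$ has trivial periodic cycle functional and invoke the dichotomy. In the accessible case, Theorem \ref{intro thm: accessible PCF}, Part I, applies immediately since $\varphi$ is $C^{1+}$ and hence Hölder. In the jointly integrable case, we apply Theorem \ref{intro thm: PHDA}, Part I, whose hypotheses ($f$ of class $C^2$, $\varphi$ of class $C^{1+}$) match the assumptions of the corollary exactly. In either case one obtains the desired continuous solution $u$ and the corresponding constant $c$.

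Since this is a reduction rather than new content, there is no genuine obstacle beyond checking that the regularity assumptions of both theorems are covered by the single hypothesis ``$f$ is $C^2$ and $\varphi$ is $C^{1+}$'' and that the notion of trivial periodic cycle functional is the same one used by both theorems. The real work lies inside Theorem \ref{intro thm: PHDA}, whose proof is the content of the rest of the paper; the corollary merely packages it together with the already-known accessible case.
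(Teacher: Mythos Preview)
Your proposal is correct and matches the paper's approach exactly: the corollary is stated immediately after citing the accessibility/joint-integrability dichotomy from \cite{HU2014,HS2021} and is deduced directly from Theorem~\ref{intro thm: accessible PCF} and Theorem~\ref{intro thm: PHDA}, with the ``only if'' direction handled by the telescoping computation recorded as Lemma~\ref{lem: properties of PCF}. There is nothing to add.
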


Recall that given $C > 0$ and $\tau > 1$, a real number $\alpha\in\mathbb{R}$ is called {\it $(C, \tau)$-Diophantine}, if
$$
|q\alpha - p| > C|q|^{-\tau},\ \forall q\in\mathbb{Z}\setminus\{0\},\ \forall p\in\mathbb{Z}.
$$
For AB-systems, we prove the following result.

\begin{theorem}\label{intro thm: PHAB}
    Let $f: M_B \to M_B$ be a jointly integrable partially hyperbolic AB-system of class $C^k$, $k > 2$, with central rotation number $\alpha$, and let $\varphi: M_B \to \mathbb{R}$ be a function of class $C^r$, $r > 1$. Assume that $\alpha$ is $(C, \tau)$-Diophantine for some $C > 0$ and $1 < \tau < \min\{r, \frac{k}{2}\}$.
    \begin{enumerate}[{\bf I.}]
        \item {\bf Existence of solutions.} The cohomological equation (\ref{fml: cohomological equation}) has a continuous solution $u: M_B \to \mathbb{R}$ for some $c\in\mathbb{R}$, if and only if $\varphi$ has trivial periodic cycle functional.
        \item {\bf H\"older regularity of solutions.} Every continuous solution is H\"older continuous.
        \item {\bf Higher regularity of solutions.} Every continuous solution is $C^\beta$-smooth for any $\beta < \min\{k - 2\tau,\ r - \tau\}$.
    \end{enumerate}
\end{theorem}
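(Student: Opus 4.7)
The necessity of trivial periodic cycle functional in item I is a routine telescoping on the Birkhoff sums defining $PCF_\gamma$, so I focus on sufficiency and regularity.

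My plan exploits the algebraic skew-product structure of AB-systems. Since $f$ is $C^k$, jointly integrable, and the central rotation number $\alpha$ is Diophantine, I would first use the classification together with a Herman-Yoccoz-type linearization of the center dynamics to identify $f$ with the AB-model $A_\alpha$ up to a leaf-wise smooth conjugacy, with regularity loss controlled by $\tau$. In these coordinates the joint $su$-leaves are the $\mathbb{T}^2$-fibers, the center leaves are the $t$-circles, and every periodic cycle of $f$ lies in a single fiber inside which the linear Anosov $A$ acts. Fourier-expanding in the central direction,
\[\varphi(x,t) = \sum_{n\in\mathbb{Z}} \hat\varphi_n(x)\,e^{2\pi i n t}, \qquad u(x,t) = \sum_{n\in\mathbb{Z}} \hat u_n(x)\,e^{2\pi i n t},\]
decouples equation (\ref{fml: cohomological equation}) into the family
\[\hat\varphi_n(x) = \lambda_n\,\hat u_n(Ax) - \hat u_n(x) + c\,\delta_{n,0}, \qquad \lambda_n := e^{2\pi i n\alpha},\]
of twisted Livsic equations over $A:\mathbb{T}^2\to\mathbb{T}^2$. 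Fourier-expanding the hypothesis $PCF_\gamma(\varphi,f)=0$ on length-2 fiber cycles shows that, for each $n$, the corresponding twisted periodic cycle functional of $\hat\varphi_n$ over $A$ vanishes.

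For $n=0$ this is the ordinary Livsic setting: Theorem \ref{intro thm: PCF and PD} promotes the triviality of length-2 PCF to constant periodic data for $\hat\varphi_0$, and Theorem \ref{intro thm: Livsic} produces a Hölder solution $\hat u_0$. For $n\ne 0$ the twist $\lambda_n$ is not a root of unity because $\alpha$ is irrational, so the twisted-periodic-data obstruction is automatic; a twisted Livsic theorem (a standard adaptation of the Livsic-Katok-Kononenko method to scalar twisted cocycles over an Anosov base) combined with the vanishing of the twisted PCF then produces a Hölder solution $\hat u_n$ on $\mathbb{T}^2$, unique since adding a constant to $\hat u_n$ would shift the equation by $(\lambda_n-1)\ne 0$.

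The main obstacle is uniform control of $\hat u_n$ as $|n|\to\infty$, so that the Fourier series recomposes into a continuous, and ultimately $C^\beta$-smooth, function on $M_B$. Two competing estimates drive the final exponent. First, regularity of $\varphi$ in $t$ gives the Fourier decay $\|\hat\varphi_n\|_{C^s(\mathbb{T}^2)}\lesssim|n|^{-(r-s)}$ for $0\le s<r$. Second, the twisted Livsic solution picks up a factor $|1-\lambda_n^N|^{-1}$ at each $A$-periodic orbit of period $N$; the Diophantine bound $|1-\lambda_n|\gtrsim|n|^{-\tau}$ propagates through the summation to a uniform loss of $|n|^\tau$ in the Hölder norm of $\hat u_n$, and transferring higher derivatives from the $t$-direction to the $x$-direction via the equation itself costs a second factor of $|n|^\tau$, which is the origin of the $2\tau$ in item III. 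Combined with the bunching-dependent Hölder ceiling on $\hat u_n$ set by $k$ and summed via a Bernstein-type characterization of Hölder spaces, this gives $u\in C^\beta$ for every $\beta<\min\{r-\tau,\,k-2\tau\}$, proving items I and III simultaneously. Item II then follows from uniqueness modulo additive constants: the difference of any two continuous solutions is continuous and $f$-invariant, hence constant by ergodicity of $A_\alpha$, so every continuous solution differs from the explicit Fourier-series solution by a constant and inherits its regularity.
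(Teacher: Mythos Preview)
Your approach is genuinely different from the paper's, and it contains a real gap at the very first step.

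The paper does \emph{not} Fourier-expand in the center direction and solve twisted Liv\v sic equations on the $\mathbb{T}^2$-fibers. Instead it uses the trivial periodic cycle functional to \emph{extend} any function on a fixed center leaf to all of $M_B$ via $u(x)=u(x_c)+PCF_{x_c}^x(\varphi,f)$, reducing the problem to a single cohomological equation on one center circle. On that circle the Herman--Khanin--Teplinsky theorem (Lemma~\ref{lem: regularity of circle diffeomorphism}) linearizes the dynamics with a $C^{k-\tau-\varepsilon}$ conjugacy, and the classical small-divisor estimate yields $w\in C^\beta$ for $\beta<\min\{r-\tau,k-2\tau\}$. The transverse regularity is then supplied by Lemma~\ref{lem: regularity}, which shows the $PCF$ extension is as smooth as $\varphi$ along $su$-leaves; no global conjugacy is needed.

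Your route requires identifying $f$ with $A_\alpha$ globally so that the equation decouples under the $t$-Fourier transform into $\hat\varphi_n(x)=\lambda_n\hat u_n(Ax)-\hat u_n(x)$. This forces you to work in coordinates where \emph{both} the center dynamics is the rigid rotation \emph{and} the fiber map is the fixed linear Anosov $A$. That is a full conjugacy $h$ from $f$ to $A_\alpha$, and you then need $\psi=\varphi\circ h^{-1}$ to retain enough regularity for the decay estimate $\|\hat\psi_n\|_{C^s(\mathbb{T}^2)}\lesssim|n|^{-(r-s)}$. But the classification only gives $h$ as a homeomorphism; Herman's theorem upgrades it to $C^{k-\tau-\varepsilon}$ \emph{along center leaves}, and nothing in your outline establishes any regularity of $h$ transverse to the center (along $su$-leaves the restriction of $h$ is not a conjugacy of any single map, since no $su$-leaf is periodic when $\alpha$ is irrational). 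Without that, $\psi$ is merely H\"older in $x$, your $C^s(\mathbb{T}^2)$ norms of $\hat\psi_n$ are not controlled, and the uniform twisted-Liv\v sic estimates cannot be run. Conversely, if you stay in the original coordinates to keep $\varphi\in C^r$, then $f$ is not $A_\alpha$: the fiber map depends on $t$ and the equation does not decouple into the autonomous twisted Liv\v sic family you wrote down. Either way the decoupling step fails as stated. The paper's $PCF$-extension argument is precisely what circumvents this, since it never needs smoothness of $h$ off a single center leaf.
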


\begin{remark}\label{intro rmk: Liouville}
	When $\alpha$ is rational or {\it Liouville}, i.e. $\alpha$ is not $(C, \tau)$-Diophantine for any $C > 0$ and $\tau > 1$, then the cohomological equation (\ref{fml: cohomological equation}) may have no continuous solution even for $f = A_\alpha$, since the equation $\varphi(t) = u(t + \alpha) - u(t) + c$ on $S^1$ may have no continuous solution for such $\alpha$.
\end{remark}

Combining with Theorem \ref{intro thm: PHDA}, we have the following result for jointly integrable partially hyperbolic diffeomorphisms on 3-manifolds with solvable fundamental groups.

\begin{corollary}\label{intro cor: DA and AB}
    Let $f: M \to M$ be a jointly integrable partially hyperbolic diffeomorphism of class $C^{2+}$, on a 3-manifold $M$ with virtually solvable fundamental group, and let $\varphi: M \to \mathbb{R}$ be a function of class $C^{1+}$. If either of the followings holds:
    \begin{enumerate}
        \item $f$ is a DA-system;
        \item $f$ is an AB-system with a Diophantine central rotation number,
    \end{enumerate}
    then the cohomological equation (\ref{fml: cohomological equation}) has a continuous solution $u: M \to \mathbb{R}$ for some $c\in \mathbb{R}$, if and only if $\varphi$ has trivial periodic cycle functional.
\end{corollary}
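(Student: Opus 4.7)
\textbf{Proof plan for Corollary \ref{intro cor: DA and AB}.} The plan is a straightforward dichotomy-and-apply argument, exploiting the topological classification quoted just before Theorem \ref{intro thm: PHDA}. By the results of Hammerlindl, Hammerlindl--Potrie, and Hammerlindl--Shi (\cite{Hammerlindl2017,HP2015,HS2021}), any jointly integrable partially hyperbolic diffeomorphism on a 3-manifold whose fundamental group is virtually solvable is, up to finite lift, one of exactly two models: a DA-system on $\mathbb{T}^3$ or an AB-system on some mapping torus $M_B$. These are precisely the two alternatives in the statement, so the proof reduces to handling each case separately.

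In case (1), the system $f$ is a jointly integrable DA-system of class $C^{2+}\subseteq C^2$, and $\varphi\in C^{1+}$. These are exactly the regularity hypotheses under which part I of Theorem \ref{intro thm: PHDA} is available, so that theorem immediately gives the equivalence between triviality of the periodic cycle functional and the existence of a continuous solution to (\ref{fml: cohomological equation}). There is no further work here beyond quoting the theorem.

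In case (2), $f$ is a jointly integrable AB-system with Diophantine central rotation number $\alpha$, and we plan to invoke part I of Theorem \ref{intro thm: PHAB}. Since $f\in C^{2+}$ we fix $k>2$ with $f\in C^{k}$, and since $\varphi\in C^{1+}$ we fix $r>1$ with $\varphi\in C^{r}$. The Diophantine hypothesis furnishes constants $C>0$ and $\tau_0>1$ with $|q\alpha-p|>C|q|^{-\tau_0}$; using that this inequality persists for every $\tau\geq\tau_0$, and interpreting the open regularity classes $C^{2+}$ and $C^{1+}$ as allowing us to take $k$ and $r$ large enough that $\tau_0<\min\{r,k/2\}$, we can pick a valid $\tau$ meeting the hypothesis of Theorem \ref{intro thm: PHAB}. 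That theorem then provides the desired equivalence.

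The only nontrivial bookkeeping, and thus the mildly delicate point of the plan, is the compatibility between the regularity of $(f,\varphi)$ and the Diophantine exponent of $\alpha$ in case (2): one needs to match $k$, $r$, and $\tau$ so that the window $1<\tau<\min\{r,k/2\}$ is nonempty. This is not a mathematical obstacle but a matter of interpreting the shorthand $C^{2+}$, $C^{1+}$ as guaranteeing sufficient smoothness relative to the given Diophantine exponent; once this is granted, both cases are immediate corollaries of the earlier theorems and the classification, and the necessity direction (which requires only H\"older regularity of $\varphi$ for $PCF$ to be well defined) is in any event elementary.
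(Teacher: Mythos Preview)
Your overall plan matches the paper exactly: the corollary is stated immediately after Theorem~\ref{intro thm: PHAB} with no proof beyond the sentence ``Combining with Theorem~\ref{intro thm: PHDA}, we have the following result,'' so the intended argument is precisely the classification-then-case-split you describe, invoking part~I of Theorem~\ref{intro thm: PHDA} or Theorem~\ref{intro thm: PHAB} according to the case.

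That said, your resolution of the bookkeeping in case~(2) contains a genuine error. The notation $C^{2+}$ does \emph{not} mean arbitrarily smooth; it means $f\in C^{k}$ for some fixed $k>2$, possibly only barely above~$2$, and likewise $C^{1+}$ fixes some $r>1$. You therefore cannot ``take $k$ and $r$ large enough'' to accommodate a prescribed Diophantine exponent~$\tau_0$. Concretely, if $f\in C^{2.1}$, $\varphi\in C^{1.05}$, and $\alpha$ is only $(C,3)$-Diophantine, then $\min\{r,k/2\}=1.05$ and there is no $\tau\in(1,1.05)$ for which $\alpha$ is $(C,\tau)$-Diophantine: the Diophantine inequality weakens as $\tau$ grows, so $(C,\tau_0)$-Diophantine implies $(C,\tau)$-Diophantine only for $\tau\ge\tau_0$, not $\tau\le\tau_0$. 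In that situation Theorem~\ref{intro thm: PHAB} simply does not apply.

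This is not a defect of your strategy so much as an imprecision in the corollary's hypotheses as written; the paper glosses over the same point. The honest fix is to read ``Diophantine'' in item~(2) as ``$(C,\tau)$-Diophantine for some $\tau<\min\{r,k/2\}$,'' i.e., to carry the compatibility condition of Theorem~\ref{intro thm: PHAB} into the corollary, rather than to reinterpret $C^{2+}$ in a nonstandard way.
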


\noindent {\bf Organization of this paper:} In Section \ref{section: Anosov Systems}, we prove Theorem \ref{intro thm: PCF and PD}. In Section \ref{section: DA-Systems}, we prove Theorem \ref{intro thm: PHDA}. In Section \ref{section: AB-Systems}, we prove Theorem \ref{intro thm: PHAB}.


\section{Preliminaries and cohomological equations for Anosov systems}\label{section: Anosov Systems}

In this section, we introduce some basic properties of periodic cycle functionals and prove Theorem \ref{intro thm: PCF and PD}.

\begin{lemma}\label{lem: properties of PCF}
	Let $f: M \to M$ be a partially hyperbolic diffeomorphism.
	\begin{enumerate}
		\item For any continuous function $u: M \to \mathbb{R}$, constant $c\in \mathbb{R}$ and accessible sequence $\gamma = (\gamma_{x_0}^{x_1}, \cdots, \gamma_{x_{k - 1}}^{x_k})$, we have 
		$$PCF_\gamma(u\circ f - u + c, f) = u(x_k) - u(x_0).$$
		In particular, when $\gamma$ is a periodic cycle, we have $$PCF_\gamma(u\circ f - u + c, f) = 0.$$
		\item For any accessible sequence $\gamma = (\gamma_{x_0}^{x_1}, \cdots, \gamma_{x_{k - 1}}^{x_k})$,
		$$f(\gamma) := (f(\gamma_{x_0}^{x_1}), \cdots, f(\gamma_{x_{k - 1}}^{x_k})) = (\gamma_{f(x_0)}^{f(x_1)}, \cdots, \gamma_{f(x_{k - 1})}^{f(x_k)})$$
		is also an accessible sequence, and for any H\"older continuous function $\varphi: M \to \mathbb{R}$, we have 
		$$PCF_{f(\gamma)}(\varphi, f) = PCF_\gamma(\varphi\circ f, f).$$
		As a corollary,
		$$PCF_{f(\gamma)}(\varphi, f) - PCF_\gamma(\varphi, f) = \varphi(x_k) - \varphi(x_0).$$
		\item For any accessible sequence $\gamma_1 = (\gamma_{x_0}^{x_1}, \cdots, \gamma_{x_{k - 1}}^{x_k})$ and $\gamma_2 = (\gamma_{x_k}^{x_{k + 1}}, \cdots, \gamma_{x_{k + l - 1}}^{x_{k + l}})$, 
		$$\gamma_1\gamma_2 := (\gamma_{x_0}^{x_1}, \cdots, \gamma_{x_{k - 1}}^{x_k}, \gamma_{x_k}^{x_{k + 1}}, \cdots, \gamma_{x_{k + l - 1}}^{x_{k + l}})$$
		is also an accessible sequence, and 
		$$PCF_{\gamma_1\gamma_2}(\varphi, f) = PCF_{\gamma_1}(\varphi, f) + PCF_{\gamma_2}(\varphi, f).$$
	\end{enumerate}
\end{lemma}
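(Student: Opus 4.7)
The plan is to reduce each of the three items to a computation on a single stable or unstable arc and then sum over the paths of the accessible sequence. The only non-trivial point lies in part (1), where $u$ is only assumed continuous, so the sum defining $PCF$ is not a priori absolutely convergent; instead I would interpret it as the limit of its partial sums, which will telescope.

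For part (1), on a stable path $\gamma_x^y$, substituting $\varphi = u\circ f - u + c$ into the summand gives
\[
\varphi(f^k(y)) - \varphi(f^k(x)) = \bigl[u(f^{k+1}(y)) - u(f^{k+1}(x))\bigr] - \bigl[u(f^k(y)) - u(f^k(x))\bigr],
\]
so the $N$th partial sum telescopes to $[u(f^N(y)) - u(f^N(x))] - [u(y) - u(x)]$. Since $x,y$ lie in a common stable leaf, $d(f^N(x), f^N(y))\to 0$ uniformly, and continuity of $u$ forces the first bracket to vanish. Hence $PCF_{\gamma_x^y}(\varphi,f) = u(y) - u(x)$. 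The unstable case is symmetric, using contraction in backward time. Summing these identities along $\gamma$ produces a telescoping sum $\sum_i (u(x_{i+1}) - u(x_i)) = u(x_k) - u(x_0)$, and this vanishes when $x_0 = x_k$.

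For part (2), that $f(\gamma)$ is again an accessible sequence is immediate from $f$-invariance of $\mathcal{F}^s$ and $\mathcal{F}^u$. The identity $PCF_{f(\gamma_x^y)}(\varphi,f) = PCF_{\gamma_x^y}(\varphi\circ f, f)$ on each individual path is a direct reindexing: inside the defining sum, $\varphi(f^k(f(x))) = (\varphi\circ f)(f^k(x))$, and similarly in negative time. Summing over the paths gives the stated equality for $\gamma$. The corollary follows by writing
\[
PCF_{f(\gamma)}(\varphi,f) - PCF_\gamma(\varphi,f) = PCF_\gamma(\varphi\circ f - \varphi, f)
\]
and applying part (1) with $u = \varphi$, $c = 0$, which evaluates the right-hand side to $\varphi(x_k) - \varphi(x_0)$. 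For part (3), the concatenation identity is immediate from splitting the defining sum $\sum_{i=0}^{k+l-1}$ into $\sum_{i=0}^{k-1} + \sum_{i=k}^{k+l-1}$.

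The one genuine obstacle I see is in (1): without assuming that $\varphi = u\circ f - u + c$ is H\"older, the summation in the definition of $PCF$ is not visibly convergent, and one has to argue convergence by hand via the telescoping above. Every other step is formal manipulation of the definitions once individual-arc identities have been established.
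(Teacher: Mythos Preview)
Your proposal is correct and follows essentially the same route as the paper: verify each identity on a single stable or unstable arc by direct computation (telescoping in item~(1), reindexing in item~(2)), then sum over the legs of the accessible sequence; item~(3) is immediate. Your explicit remark that in item~(1) the series must be read as a limit of partial sums (since $u$ is only continuous) is exactly how the paper handles it, though the paper does so silently by writing $\lim_{n\to+\infty}$ of the truncated sum.
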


\begin{proof}
	For the first item, take $\gamma_x^y$ lying in a single leaf of $\mathcal{F}^s$, and we have
	\begin{align*}
		&PCF_{\gamma_x^y}(u\circ f - u + c, f)\\
		= &\ \lim_{n\to+\infty}-\sum_{k = 0}^n\left(\left(u(f^{k + 1}(y)) - u(f^{k + 1}(x))\right) - \left(u(f^k(y) - u(f^k(x))\right)\right)\\
		= &\ u(y) - u(x).
	\end{align*}
	Similarly, for some $\gamma_x^y$ lying in a single leaf of $\mathcal{F}^u$, we have
	\begin{align*}
		&PCF_{\gamma_x^y}(u\circ f - u + c, f)\\
		= &\ \lim_{n\to+\infty}\sum_{k = 1}^n\left(\left(u(f^{-(k - 1)}(y)) - u(f^{-(k - 1)}(x))\right) - \left(u(f^{-k}(y)) - u(f^{-k}(x))\right)\right)\\
		= &\ u(y) - u(x).
	\end{align*}
	Therefore, for any accessible sequence $\gamma = (\gamma_{x_0}^{x_1}, \cdots, \gamma_{x_{k - 1}}^{x_k})$, we have
	$$PCF_\gamma(u\circ f - u + c, f) = \sum_{i = 0}^{k - 1}(u(x_{i + 1}) - u(x_i)) = u(x_k) - u(x_0).$$
	
	For the second item, take $\gamma_x^y$ lying in a single leaf of $\mathcal{F}^s$, and we have
	$$PCF_{f(\gamma_x^y)}(\varphi, f) = -\sum_{k = 0}^{+\infty}(\varphi(f^{k + 1}(y)) - \varphi(f^{k + 1}(x))) = PCF_{\gamma_x^y}(\varphi\circ f, f).$$
	Similarly, for some $\gamma_x^y$ lying in a single leaf of $\mathcal{F}^u$, we have
	$$PCF_{f(\gamma_x^y)}(\varphi, f) = \sum_{k = 1}^{+\infty}(\varphi(f^{-k + 1}(y)) - \varphi(f^{-k + 1}(x))) = PCF_{\gamma_x^y}(\varphi\circ f, f).$$
	Therefore, for any accessible sequence $\gamma = (\gamma_{x_0}^{x_1}, \cdots, \gamma_{x_{k - 1}}^{x_k})$, we have
	$$PCF_{f(\gamma)}(\varphi, f) = \sum_{i = 0}^{k - 1}PCF_{f(\gamma_{x_i}^{x_{i + 1}})}(\varphi, f) = \sum_{i = 0}^{k - 1}PCF_{\gamma_{x_i}^{x_{i + 1}}}(\varphi\circ f, f) = PCF_\gamma(\varphi\circ f, f).$$
	
	The third item is clear. 
\end{proof}

We also need some lemmas for circle diffeomorphisms with Diophantine rotation numbers.

\begin{lemma}\label{lem: algebraic diophantine}
	{\rm (\cite{Roth1955})} Let $\alpha\in\mathbb{R}$ be an irrational algebraic number. Then for every $\delta > 0$, there exists $C(\alpha, \delta) > 0$ such that $|q\alpha - p| > C(\alpha, \delta)|q|^{-(1 + \delta)}$, $\forall q\in\mathbb{Z}\setminus\{0\}$, $\forall p\in\mathbb{Z}$.
\end{lemma}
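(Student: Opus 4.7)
The statement is exactly Roth's theorem on rational approximations to algebraic irrationals, so my plan is simply to quote that theorem and rearrange it into the form stated in the lemma. Specifically, Roth's 1955 theorem says that for every algebraic irrational $\alpha$ and every $\epsilon>0$, the inequality $|\alpha-p/q|<q^{-2-\epsilon}$ has only finitely many solutions $(p,q)\in\mathbb{Z}\times\mathbb{Z}_{>0}$. Setting $\epsilon=\delta$ and multiplying through by $|q|$, this means there are only finitely many pairs $(p,q)$ with $|q\alpha-p|\leq |q|^{-(1+\delta)}$; absorbing these finitely many exceptions into the constant and taking the minimum of the resulting finitely many positive values $|q\alpha-p|\cdot|q|^{1+\delta}$ produces the required constant $C(\alpha,\delta)>0$.

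If one were to reprove Roth's theorem rather than cite it, the standard route is by contradiction. Assuming infinitely many very good rational approximations $p_1/q_1, p_2/q_2, \ldots$ to $\alpha$ with $q_i$ growing rapidly, one uses Siegel's lemma to construct an auxiliary polynomial $P(x_1,\ldots,x_m)\in\mathbb{Z}[x_1,\ldots,x_m]$ with prescribed multi-degrees $(r_1,\ldots,r_m)$, controlled height, and vanishing to high generalized index at the diagonal point $(\alpha,\ldots,\alpha)$. Taylor-expanding and using the quality of the approximations then yields a strong upper bound on $|P(p_1/q_1,\ldots,p_m/q_m)|$, while Roth's lemma (the nonvanishing/index theorem for polynomials evaluated at tuples of rationals with fast-growing denominators) gives a matching lower bound on the same quantity. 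For $m$ chosen large enough in terms of $\alpha$ and $\delta$, the two bounds contradict each other.

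The main obstacle in the full argument is the proof of Roth's lemma itself, which requires a delicate induction on the number of variables and careful tracking of denominators and heights; this is precisely the content of Roth's original paper \cite{Roth1955} and is far outside the scope of the present work. For our purposes, only the effective statement is needed: it guarantees that whenever the central rotation number $\alpha$ of an AB-system is forced to be algebraic (as happens for natural examples coming from algebraic constructions on $M_B$), $\alpha$ automatically satisfies the Diophantine hypothesis of Theorem \ref{intro thm: PHAB} for every $\tau>1$.
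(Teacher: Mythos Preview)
Your proposal is correct and matches the paper's approach: the paper gives no proof at all and simply cites \cite{Roth1955}, so your first paragraph, which quotes Roth's theorem and absorbs the finitely many exceptional pairs into the constant, is exactly the intended reduction. One small correction to your closing remark: in this paper the lemma is invoked for the DA-systems (the algebraic numbers $\alpha_2,\alpha_3$ of Lemma~\ref{lem: T_n}, used inside the proof of Theorem~\ref{thm: equation for Tn}), not for the central rotation number of the AB-systems.
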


\begin{lemma}\label{lem: small denominator}
	{\rm (\cite[Lemma 2]{Arnold1961})} Fix $\alpha\in\mathbb{R}$ and $0 < \delta < \varepsilon$. For any $p: \mathbb{Z}^+ \to \mathbb{Z}$, the series
	$$\sum_{l = 1}^\infty\frac{1}{l^{1 + \varepsilon}}\frac{1}{|l\alpha - p(l)|}$$
	converges, if $\alpha$ is $(C, 1 + \delta)$-Diophantine for some $C > 0$.
\end{lemma}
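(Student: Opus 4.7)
The plan is to reduce the series to one involving only the distance from $l\alpha$ to the nearest integer, and then to use the Diophantine hypothesis in two different ways: once pointwise for each $l$, and once applied to differences $l_{2}-l_{1}$ of near-resonant indices.

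For each $l\geq 1$, let $d_{l}:=\min_{p\in\mathbb{Z}}|l\alpha-p|$ and let $q(l)\in\mathbb{Z}$ be a nearest integer to $l\alpha$. For the given function $p$, either $p(l)=q(l)$, in which case $|l\alpha-p(l)|=d_{l}$, or else $|l\alpha-p(l)|\geq 1/2$. In the second case the $l$-th term of the original series is at most $2/l^{1+\varepsilon}$, which is summable, so it suffices to prove
$$\sum_{l=1}^{\infty}\frac{1}{l^{1+\varepsilon}\,d_{l}}<\infty.$$

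I would then decompose dyadically by the size of $d_{l}$: for $k\geq 1$ set $A_{k}:=\{l\geq 1:\ 2^{-k-1}<d_{l}\leq 2^{-k}\}$, so the above sum is bounded by $\sum_{k\geq 1}2^{k+1}\sum_{l\in A_{k}}l^{-(1+\varepsilon)}$. The Diophantine condition $d_{l}>C\,l^{-(1+\delta)}$ enters in two ways. Pointwise, every $l\in A_{k}$ satisfies $l>(C\cdot 2^{k})^{1/(1+\delta)}$. For differences, if $l_{1}<l_{2}$ both lie in $A_{k}$, then the triangle inequality yields $d_{l_{2}-l_{1}}\leq d_{l_{1}}+d_{l_{2}}\leq 2^{1-k}$, and the Diophantine bound applied to $l_{2}-l_{1}$ then forces $l_{2}-l_{1}>c_{0}\cdot 2^{k/(1+\delta)}$ with $c_{0}:=(C/2)^{1/(1+\delta)}$. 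Enumerating the elements of $A_{k}$ in increasing order $l_{1}<l_{2}<\cdots$ therefore gives $l_{j}>jc_{0}\cdot 2^{k/(1+\delta)}$, whence
$$\sum_{l\in A_{k}}\frac{1}{l^{1+\varepsilon}}<c_{0}^{-(1+\varepsilon)}\,\zeta(1+\varepsilon)\cdot 2^{-k(1+\varepsilon)/(1+\delta)}.$$

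Substituting this back turns the outer sum into a geometric series with ratio $2^{(\delta-\varepsilon)/(1+\delta)}$, which is less than $1$ precisely because $\varepsilon>\delta$; this is the only place the hypothesis $\varepsilon>\delta$ is invoked, and it completes the proof. The main obstacle, as I anticipate it, is the spacing estimate in $A_{k}$: a naive pointwise use of $d_{l}>Cl^{-(1+\delta)}$ gives only $\sum l^{\delta-\varepsilon}$, which would demand the strictly stronger condition $\varepsilon>1+\delta$ and is insufficient here. Exploiting the Diophantine condition on the differences $l_{2}-l_{1}$ to control the sparseness of near-resonant indices is what closes the gap and enables the conclusion under the stated hypothesis $\varepsilon>\delta$.
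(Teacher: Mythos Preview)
The paper does not supply its own proof of this lemma; it is quoted directly from Arnold's paper \cite{Arnold1961} and used as a black box. So there is no in-paper argument to compare against.

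Your argument is correct and is essentially the classical small-denominator estimate. The reduction to the nearest-integer distance $d_l$ is clean, and the decisive step---using the Diophantine condition on the \emph{difference} $l_2-l_1$ to force the gap bound $l_2-l_1>c_0\,2^{k/(1+\delta)}$ inside each dyadic level $A_k$---is exactly the right idea and is what Arnold's original proof does as well. Your observation that a purely pointwise use of $d_l>Cl^{-(1+\delta)}$ would only yield convergence under the stronger constraint $\varepsilon>1+\delta$ is also accurate, and explains why the spacing argument is not optional. One tiny housekeeping remark: the sets $A_k$ for $k\ge 1$ do cover all $l$, since $0<d_l\le 1/2$ for irrational $\alpha$; you might want to say this explicitly.
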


Now we prove Theorem \ref{intro thm: PCF and PD}. Let $f: M \to M$ be a transitive Anosov diffeomorphism and let $\varphi: M \to \mathbb{R}$ be a H\"older continuous function. We want to show that if $PCF_\gamma(\varphi, f) = 0$ for any periodic cycle $\gamma$ with length 2, then $(f,\varphi)$ has constant periodic data.

\begin{proof}[Proof of Theorem \ref{intro thm: PCF and PD}]
	Let $\cF^s$ and $\cF^u$ be the stable and unstable foliations of $f$. Denote $d_\sigma$ the induced distance in each leaf of $\mathcal{F}^\sigma$, $\sigma = s$, $u$.
	
	Assume for contradiction that there exist $p$, $q\in\text{Per}(f)$ with period $\pi_p$, $\pi_q$, satisfying
	$$\bar{\varphi}(p) := \frac{1}{\pi_p}\sum_{i = 0}^{\pi_p - 1}\varphi(f^i(p)) < \frac{1}{\pi_q}\sum_{i = 0}^{\pi_q - 1}\varphi(f^i(q)) =: \bar{\varphi}(q).$$
	Take $a\in \mathcal{F}^s(p)\pitchfork\mathcal{F}^u(q)$ and $b\in\mathcal{F}^s(q)\pitchfork\mathcal{F}^u(p)$. There exist open neighborhoods $U_p$ and $U_q$ of $p$ and $q$, such that
	\begin{align*}
		\frac{1}{\pi_p}\sum_{i = 0}^{\pi_p - 1}\varphi(f^i(x)) < \frac{\bar{\varphi}(p) + \bar{\varphi}(q)}{2},\ \forall x\in U_p;\\
		\frac{1}{\pi_q}\sum_{i = 0}^{\pi_q - 1}\varphi(f^i(x)) > \frac{\bar{\varphi}(p) + \bar{\varphi}(q)}{2},\ \forall x\in U_q.\\
	\end{align*}
	There exists $K\in\mathbb{Z}^+$ such that when $k \geq K$, we have
	\begin{align*}
		&f^{-k\pi_p\pi_q}(b)\in U_p,\ f^{k\pi_p\pi_q}(b)\in U_q;\\
		&f^{-k\pi_p\pi_q}(a)\in U_q,\ f^{k\pi_p\pi_q}(a)\in U_p.
	\end{align*}
	Therefore, there exist open neighborhoods $U_b$ and $U_a$ of $b$ and $a$ respectively, such that
	\begin{align*}
		&f^{-K\pi_p\pi_q}(x)\in U_p,\ f^{K\pi_p\pi_q}(x)\in U_q,\ \forall x\in U_b;\\
		&f^{-K\pi_p\pi_q}(x)\in U_q,\ f^{K\pi_p\pi_q}(x)\in U_p,\ \forall x\in U_a.
	\end{align*}
	
	For $k$ sufficiently large, $f^{-k\pi_p\pi_q}(a)$ is close to $q$, and hence there exists 
	$$z\in\mathcal{F}^s\left(f^{-k\pi_p\pi_q}(a)\right)\pitchfork\mathcal{F}^u(b),$$
	such that $d_u(z, b)$ is small. As a result, there exists a sequence $k_n \to +\infty$ and $\{z_n\}\subseteq\mathcal{F}^s(p)\pitchfork\mathcal{F}^u(p)$ such that $d_s(z_n, f^{-k_n\pi_p\pi_q}(a)) \to d_s(b, q)$, $z_n \in U_b$, and $d_u(z_n, b) \to 0$. We may also assume that $f^{k_n\pi_p\pi_q}(z_n) \in U_a$ and $f^{k\pi_p\pi_q}(z_n)\in U_q$ for $K \leq k < k_n - K$. Now we have
	\begin{align*}
		&f^{k\pi_p\pi_q}(z_n) \in U_p,\ k \leq -K;\\
		&f^{k\pi_p\pi_q}(z_n) \in U_q,\ K \leq k \leq k_n - K;\\
		&f^{k\pi_p\pi_q}(z_n) \in U_p,\ k \geq k_n + K.
	\end{align*}
	Consider the periodic cycle $\gamma_n := (\gamma_p^{z_n}, \gamma_{z_n}^p)$, where $\gamma_p^{z_n}$ lies in $\mathcal{F}^u(p)$ and $\gamma_{z_n}^p$ lies in $\mathcal{F}^s(p)$. Let $0 < \theta < 1$ be the H\"older exponent of $\varphi$ and denote \begin{align*}
		&\left\|\varphi\right\| = \max_x|\varphi(x)|;\\
		&\text{diam}(U_p) = \sup_{x, y\in U_p}d(x, y);\\
		&\lambda_u = \min_xm\left(Df|_{E^u(x)}\right);\\
		&\lambda_s = \max_x\left\|Df|_{E^s(x)}\right\|.
	\end{align*}
	Then we have
	\begin{align*}
		0=&|PCF_\gamma(\varphi, f)|\\
		=& \left|\sum_{k\in\mathbb{Z}}(\varphi(f^k(z_n)) - \varphi((f^k(p)))\right|\\
		\geq& \left|\sum_{k = K}^{k_n - K - 1}\sum_{i = 0}^{\pi_p\pi_q - 1}(\varphi(f^{k\pi_p\pi_q + i}(z_n)) - \varphi(f^{k\pi_p\pi_q + i}(p)))\right|\\
		-&\left|\sum_{k = -K\pi_p\pi_q}^{K\pi_p\pi_q - 1}(\varphi(f^k(z_n)) - \varphi(f^k(p)))\right| - \left|\sum_{k = (k_n - K)\pi_p\pi_q}^{(k_n + K)\pi_p\pi_q - 1}(\varphi(f^k(z_n)) - \varphi(f^k(p)))\right|\\
		-& \left|\sum_{k < -K\pi_p\pi_q}(\varphi(f^k(z_n)) - \varphi(f^k(p)))\right| - \left|\sum_{k > (k_n + K)\pi_p\pi_q}(\varphi(f^k(z_n)) - \varphi(f^k(p)))\right|\\
		\geq& (k_n - 2K)\pi_p\pi_q\frac{\bar{\varphi}(q) - \bar{\varphi}(p)}{2} - 8K\pi_p\pi_q\left\|\varphi\right\| - C(\text{diam }U_p)^\theta\left(\frac{1}{1 - \lambda_u^{-\theta}} + \frac{1}{1 - \lambda_s^\theta}\right)\\
		\to&+\infty \text{ as } n \to +\infty.
	\end{align*}
	This is a contradiction. Thus we must have $\bar{\varphi}(p)=\bar{\varphi}(q)$ for any periodic points $p,q$ of $f$. This completes the proof of Theorem \ref{intro thm: PCF and PD}.
\end{proof}

\section{Cohomological equations for DA-systems}\label{section: DA-Systems}

To prove Theorem \ref{intro thm: PHDA}, we need the following theorem about joint integrable partially hyperbolic DA-systems on $\mathbb{T}^3$.

\begin{theorem}\label{thm: PHDA spectrum rigidity}
    {\rm (\cite{HU2014,GanS,HS2021})} Let $f: \mathbb{T}^3 \to \mathbb{T}^3$ be a jointly integrable partially hyperbolic Anosov diffeomorphism with a partially hyperbolic splitting $T\mathbb{T}^3 = E^s\oplus E^c\oplus E^u$. Assume that $A\in\text{Aut}(\mathbb{T}^3)$ is the linear part of $f$, and that $h$ is the conjugacy homotopic to identity from $f$ to $A$. Then $A$ also has a partially hyperbolic splitting $L^s\oplus L^c\oplus L^u$, and the conjugacy $h$ preserves the stable, central and unstable foliations, i.e. $h(\mathcal{F}^\sigma(x)) = \mathcal{L}^\sigma(h(x))$, $\forall x\in \mathbb{T}^3$, $\sigma = s$, $c$, $u$. Moreover, if $f$ is $C^k$-smooth and $k$-bunched for some $k > 1$, then the central leaves are $C^k$-smooth and $h$ is $C^k$-smooth along central leaves.
\end{theorem}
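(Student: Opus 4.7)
The plan is to decompose the theorem into three layers corresponding to the three cited sources.

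First, for the partially hyperbolic splitting of $A$: by Franks--Manning every Anosov diffeomorphism on $\mathbb{T}^3$ is topologically conjugate, via a map homotopic to the identity, to its linearization $A\in\mathrm{Aut}(\mathbb{T}^3)$, which is a hyperbolic toral automorphism. Because $f$ is partially hyperbolic with one-dimensional center and $\pi_1(\mathbb{T}^3)=\mathbb{Z}^3$, the eigenvalue pattern of $A$ is forced to consist of three distinct real eigenvalues $|\lambda^s|<|\lambda^c|<|\lambda^u|$ with $|\lambda^s|<1<|\lambda^u|$; the corresponding eigenspaces provide the splitting $L^s\oplus L^c\oplus L^u$. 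Joint integrability of $f$ is what rules out complex eigenvalues for $A$: the $su$-orbits of a linear toral automorphism with a complex conjugate eigenvalue pair are dense in each $L^c$-coset, which is incompatible with the existence of an $A$-invariant $su$-foliation conjugate to $\mathcal{F}^{su}$.

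Second, I would show $h$ preserves all three foliations. For $\mathcal{F}^s$ and $\mathcal{F}^u$ this is the standard Franks--Manning argument on the universal cover: $\widetilde{\mathcal{F}}^s(x)=\{y : \sup_{n\ge 0} d(\widetilde{f}^n x, \widetilde{f}^n y)<\infty\}$, and this characterization is preserved by the bi-H\"older conjugacy $\widetilde{h}$; symmetrically for $\mathcal{F}^u$. For the center foliation I would invoke dynamical coherence for DA-systems (Brin--Burago--Ivanov, Potrie, Hammerlindl) to obtain $\mathcal{F}^{cs}$ and $\mathcal{F}^{cu}$; both admit topological characterizations as weak (un)stable manifolds, hence are preserved by $\widetilde{h}$. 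Intersecting, $h(\mathcal{F}^c)=h(\mathcal{F}^{cs}\cap\mathcal{F}^{cu})=\mathcal{L}^{cs}\cap\mathcal{L}^{cu}=\mathcal{L}^c$.

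Third, $C^k$-smoothness of the central leaves is immediate from Hirsch--Pugh--Shub under the $C^k$ and $k$-bunched hypotheses. The genuinely hard content, and the place where joint integrability is essential, is showing that $h$ is $C^k$ along central leaves. My strategy would be to first establish that the center Lyapunov exponent of $f$ is constant and equal to $\log|\lambda^c|$: joint integrability provides a trivial $su$-holonomy structure that allows one to transport periodic return maps along central leaves, after which a Liv\v sic-type closing argument forces constancy of the exponent. Restricted to a central leaf, $h$ then conjugates a one-dimensional $C^k$ hyperbolic map whose periodic multipliers all agree with those of the affine multiplication by $\lambda^c$; a one-dimensional Liv\v sic-regularity theorem in the spirit of de la Llave--Marco--Moriy\'on upgrades the conjugacy to $C^k$ on each leaf. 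I expect the main technical obstacle to be propagating this leafwise smooth conjugacy uniformly across $\mathbb{T}^3$: one needs the rigidity imposed by joint integrability---matching periodic data simultaneously along all central leaves---in order to glue the one-dimensional smooth conjugacies into a transversally continuous family of $C^k$ maps.
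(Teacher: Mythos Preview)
The paper does not prove this theorem: it is stated as a black-box result assembled from \cite{HU2014,GanS,HS2021}, with only the brief Remark immediately following it to indicate how the last clause (the $C^k$-smoothness of $h$ along center leaves) is obtained, namely via Gogolev's bootstrap \cite[Lemma~2.4]{Go} once one knows $h$ is absolutely continuous along $\mathcal{F}^c$. So there is nothing to compare your argument against at the level of detail you have written; your sketch is essentially a reconstruction of the content of the cited papers, and in broad strokes it is correct.

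Two points are worth tightening. First, your use of joint integrability to rule out complex eigenvalues of $A$ is unnecessary and somewhat misplaced: the existence of a one-dimensional $Df$-invariant center bundle on $\mathbb{T}^3$ already forces $A$ to have three real eigenvalues, via the quasi-isometry of the lifted foliations to linear ones (Brin--Burago--Ivanov, Hammerlindl). Joint integrability enters only later, in the rigidity step. Second, for the regularity upgrade along center leaves, the route actually taken in the cited literature (and hinted at in the paper's Remark) is slightly different from your de la Llave--Marco--Moriy\'on picture: Gan--Shi \cite{GanS} show that joint integrability forces the center Lyapunov exponent to equal $\log|\lambda^c|$, which yields that $h$ is $C^1$ (in particular absolutely continuous) along $\mathcal{F}^c$; then Gogolev's bootstrap lemma \cite{Go} upgrades absolute continuity directly to $C^k$ under the bunching hypothesis, without a separate leafwise Liv\v sic argument. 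Your worry about ``gluing'' leafwise conjugacies into a transversally continuous family is handled automatically by this mechanism, since one works with the single global conjugacy $h$ throughout.
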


\begin{remark}
	Assume $A$ is expanding along $L^c$. If $f$ is jointly integrable and $k$-bunched, then $\cF^c$ is an $f$-invariant expanding foliation with $C^k$-smooth leaves. If the conjugacy $h$ is absolutely continuous along $\cF^c$, then it is $C^k$-smooth, see \cite[Lemma 2.4]{Go}.
\end{remark}

Under the assumptions of Theorem \ref{intro thm: PHDA},  we know that $f$ is Anosov. Hence we may assume that $f(0) = 0$, $F$ is the lift of $f$ satisfying $F(0) = 0$, and $H$ is the unique conjugacy bounded from identity, from $F$ to $A$, commuting with $\mathbb{Z}^3$ and hence descending to a conjugacy $h$ from $f$ to $A$. Note that $H(0) = 0$ and $h(0) = 0$. The conjugacy $h$ preserves the stable, central and unstable foliations. Moreover, the conjugacy is $C^{1+}$-smooth along central leaves.

Here are some notations. The partially hyperbolic splitting of $f$ is denoted by $T\mathbb{T}^3 = E^s\oplus E^c\oplus E^u$, and $E^{sc}:= E^s\oplus E^c$, $E^{cu} := E^c\oplus E^u$, $E^{su} := E^s\oplus E^u$. The integral foliation of $E^\sigma$ is denoted by $\mathcal{F}^\sigma$, $\sigma = s$, $c$, $u$, $sc$, $cu$, $su$. Their lifts are denoted by $\widetilde{E}^\sigma$ and $\widetilde{\mathcal{F}}^\sigma$. For the linear system, $L^\sigma$, $\mathcal{L}^\sigma$, $\widetilde{L}^\sigma$, $\widetilde{\mathcal{L}}^\sigma$ are understood in a similar way.

Since $H$ preserves the foliations on the universal cover, $\widetilde{\mathcal{F}}^{su}$ and $\widetilde{\mathcal{F}}^c$ has {\it global product structure}. That is, $\widetilde{\mathcal{F}}^{su}(x)$ and $\widetilde{\mathcal{F}}^c(y)$ intersect transversely at a unique point, for every $x$, $y \in \mathbb{R}^3$. Moreover, $\widetilde{\mathcal{F}}^s(x)$ and $\widetilde{\mathcal{F}}^u(y)$ also intersect transversely at a unique point when $y\in \widetilde{\mathcal{F}}^{su}(x)$. Denote

\begin{align*}
    \beta^{su, c}(x, y) := \beta^{su, c}_{\widetilde{\mathcal{L}}}(x, y) &:= \text{the unique point in } \widetilde{\mathcal{L}}^{su}(x)\pitchfork\widetilde{\mathcal{L}}^c(y),\\
    \beta^{su, c}_{\widetilde{\mathcal{F}}}(x, y) &:= \text{the unique point in } \widetilde{\mathcal{F}}^{su}(x)\pitchfork\widetilde{\mathcal{F}}^c(y),\\
    \beta^{s, u}(x, y) := \beta^{s, u}_{\widetilde{\mathcal{L}}}(x, y) &:= \text{the unique point in } \widetilde{\mathcal{L}}^{s}(x)\pitchfork\widetilde{\mathcal{L}}^u(y) \text{ when } y\in\widetilde{\mathcal{L}}^{su}(x),\\
    \beta^{s, u}_{\widetilde{\mathcal{F}}}(x, y) &:= \text{the unique point in } \widetilde{\mathcal{F}}^{s}(x)\pitchfork\widetilde{\mathcal{F}}^u(y) \text{ when } y\in\widetilde{\mathcal{F}}^{su}(x).
\end{align*}

\begin{lemma}\label{lem: continuity of beta}
    Denote $\widetilde{M}^{su} := \{(x, y)\in\mathbb{R}^3\times\mathbb{R}^3: y\in\widetilde{\mathcal{F}}^{su}(x)\}$. The followings hold.
    \begin{enumerate}
        \item The map $\beta^{su, c}_{\widetilde{\mathcal{F}}}(x, y)$ is H\"older continuous with respect to $(x, y)\in\mathbb{R}^3\times\mathbb{R}^3$.
        \item The map $\beta^{s, u}_{\widetilde{\mathcal{F}}}(x, y)$ is H\"older continuous with respect to $(x, y)\in \widetilde{M}^{su}$.
    \end{enumerate}
\end{lemma}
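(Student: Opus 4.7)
The plan is to transfer the regularity of the intersection maps from the linear model $A$ to the DA-system $f$ via the conjugacy $H$, and to exploit that in the linear case the $\beta$ maps are literally affine.

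First I would note that, since the paper has already reduced to the case where $f$ is Anosov and $H$ is the conjugacy bounded from the identity between $F$ and $A$ on $\mathbb{R}^3$, the maps $H$ and $H^{-1}$ descend to H\"older homeomorphisms of $\mathbb{T}^3$. This is a classical consequence of structural stability for Anosov diffeomorphisms on tori together with the fact that $H$ respects $\widetilde{\mathcal{F}}^\sigma$ and $\widetilde{\mathcal{L}}^\sigma$; alternatively, the leafwise smoothness of $H$ along $\widetilde{\mathcal{F}}^s$ and $\widetilde{\mathcal{F}}^u$ combined with Journ\'e's lemma yields global H\"older regularity on $\mathbb{T}^3$, and hence on the universal cover since $H$ commutes with $\mathbb{Z}^3$ and is bounded from the identity.

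Next I would compute the linear $\beta$ maps explicitly. In the model $A$, each leaf $\widetilde{\mathcal{L}}^\sigma(x)$ is the affine translate $x+\widetilde{L}^\sigma$ of the constant linear subspace $\widetilde{L}^\sigma$. Writing the direct sum decomposition $\mathbb{R}^3=\widetilde{L}^{su}\oplus\widetilde{L}^c$ and letting $\pi^{su}$, $\pi^c$ be the associated linear projections, for any $x,y\in\mathbb{R}^3$ the unique intersection point of $x+\widetilde{L}^{su}$ and $y+\widetilde{L}^c$ is
\[
\beta^{su,c}_{\widetilde{\mathcal{L}}}(x,y) \;=\; x+\pi^{su}(y-x) \;=\; y+\pi^c(x-y),
\]
which is affine (hence $C^\infty$) in $(x,y)$. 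Similarly, on the set $\widetilde{M}^{su}_A=\{(x,y):y\in\widetilde{\mathcal{L}}^{su}(x)\}$, decomposing $\widetilde{L}^{su}=\widetilde{L}^s\oplus\widetilde{L}^u$ with projections $\pi^s$, $\pi^u$ gives
\[
\beta^{s,u}_{\widetilde{\mathcal{L}}}(x,y) \;=\; x+\pi^s(y-x),
\]
again affine in $(x,y)\in\widetilde{M}^{su}_A$.

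Finally, because $H$ sends $\widetilde{\mathcal{F}}^\sigma(x)$ to $\widetilde{\mathcal{L}}^\sigma(H(x))$ for each $\sigma$, I have the conjugation identities
\[
\beta^{su,c}_{\widetilde{\mathcal{F}}}(x,y) = H^{-1}\!\left(\beta^{su,c}_{\widetilde{\mathcal{L}}}(H(x),H(y))\right),
\qquad
\beta^{s,u}_{\widetilde{\mathcal{F}}}(x,y) = H^{-1}\!\left(\beta^{s,u}_{\widetilde{\mathcal{L}}}(H(x),H(y))\right),
\]
the latter being defined precisely on $\widetilde{M}^{su}$ since $H$ maps $\widetilde{M}^{su}$ homeomorphically onto $\widetilde{M}^{su}_A$. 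Composition of a H\"older map with a smooth map with a H\"older map is again H\"older, so both conclusions follow.

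The only potentially nontrivial ingredient is the bi-H\"older continuity of $H$. In the setting of Theorem 3.1, $f$ is an Anosov diffeomorphism on $\mathbb{T}^3$ conjugate to the hyperbolic automorphism $A$, so this regularity is classical (Anosov--Franks--Manning shadowing for one direction, and H\"older continuity of the stable/unstable holonomies together with Journ\'e's theorem for the other). Once that is in hand, no further dynamical input is required: the rest is simply linear algebra in the model and the chain rule for H\"older moduli.
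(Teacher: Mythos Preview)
Your argument is correct, but it is a genuinely different route from the paper's. The paper dispatches this lemma in one line: ``The results follow from the H\"older continuity of the foliations,'' meaning that for any pair of transverse foliations with uniformly $C^1$ leaves and H\"older tangent distributions admitting a (global) product structure, the induced intersection map is automatically H\"older in both arguments --- essentially because local product charts are bi-H\"older. This is a general foliation fact that uses nothing specific about DA-systems or the conjugacy $H$.

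Your proof instead exploits the special structure of the situation: you push everything through the bi-H\"older conjugacy $H$ to the linear model $A$, where the $\beta$ maps are visibly affine, and then pull back. This is perfectly valid here --- the bi-H\"older regularity of the Anosov conjugacy is classical, and $H$ does send $\widetilde{\mathcal{F}}^{su}$ to $\widetilde{\mathcal{L}}^{su}$ since it preserves both the $s$- and $u$-foliations and every $su$-leaf is swept out by $su$-paths. The trade-off is that your argument is more concrete and yields explicit formulas in the linear chart, but it is tied to the existence of a linear model and would not generalize; the paper's one-liner, while terse, applies to any pair of transverse H\"older foliations with product structure. One minor quibble: your parenthetical about obtaining H\"older regularity of $H$ via leafwise smoothness plus Journ\'e is not quite the right mechanism (Journ\'e upgrades regularity rather than producing it), but since you correctly cite the standard structural-stability argument as the primary justification, this does not affect the proof.
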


\begin{proof}
    The results follow from the H\"older continuity of the foliations.
\end{proof}

Assume that $\varphi$ has trivial periodic cycle functional. For $(x, y) \in \widetilde{M}^{su}$, the function $PCF_x^y(\varphi, F) := PCF_\gamma(\varphi, F)$, where $\gamma = (\gamma_{x_0}^{x_1}, \cdots, \gamma_{x_{k - 1}}^{x_k})$ is an accessible sequence with $x_0 = x$ and $x_k = y$, is well-defined.

On the universal cover $\mathbb{R}^3$, $PCF_0^x(\varphi, F)$ is the solution of the cohomological equation (\ref{fml: cohomological equation}) restricted to the fixed leaf $\widetilde{\mathcal{F}}^{su}(0)$, because
$$PCF_0^{F(x)}(\varphi, F) - PCF_0^x(\varphi, F) = PCF_{F(0)}^{F(x)}(\varphi, F) - PCF_0^x(\varphi, F) = \varphi(x) - \varphi(0).$$
Inspired by this, assume that there is a continuous function $u: \widetilde{\mathcal{F}}^c(0) \to \mathbb{R}$ on the fixed central leaf, and denote $x_c := \beta_{\widetilde{\mathcal{F}}}^{su, c}(x, 0)$. Then we can extend the function $u$ to the whole universal cover by
\begin{align}
    u(x) = u(x_c) + PCF_{x_c}^x(\varphi, F). \label{fml: extension}
\end{align}

\begin{lemma}\label{lem: regularity}
    {\rm (\cite[Lemma 2.7]{Correa2023})}Fix $x_0\in\mathbb{R}^3$ and $r > 1$. The followings hold.
    \begin{enumerate}
        \item If $f$ and $\varphi$ are both $C^r$-smooth, then
        $$PCF_{x_0}^x(\varphi, F): \widetilde{\mathcal{F}}^{su}(x_0) \to\mathbb{R}$$
        is $C^r$-smooth.
        \item If $f$ and $\varphi$ are both $C^r$-smooth and $f$ is strongly $r$-bunched, then
        $$PCF_{x_c}^x(\varphi, F): \widetilde{\mathcal{F}}^c(x_0) \to \mathbb{R}$$
        is $C^r$-smooth.
    \end{enumerate}
\end{lemma}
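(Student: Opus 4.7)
The plan is to treat the two parts separately. Part (1) follows from standard Liv\v sic-type regularity along the two hyperbolic subfoliations plus Journ\'e's lemma; part (2) requires the strong bunching to absorb the central-direction derivatives that blow up in a naive term-by-term approach.

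For part (1), I fix an $s$-leg $\gamma_x^y \subset \widetilde{\cF}^s$ and differentiate the defining series $-\sum_{k \geq 0}(\varphi(F^k(y)) - \varphi(F^k(x)))$ term by term in $y$ along $E^s$. By the Fa\`a di Bruno formula, the $j$-th derivative of $\varphi \circ F^k$ along $E^s$ is a polynomial expression in $D^i\varphi(F^k(y))$ for $1 \leq i \leq j$ and in the derivatives of $F^k$ restricted to $\widetilde{\cF}^s$, whose norms are ultimately controlled by powers of $\|DF^k|_{E^s}\|$. Uniform exponential contraction of $E^s$ then makes the series absolutely convergent in $C^{\lfloor r \rfloor}$, and for non-integer $r$ the fractional part is handled by a H\"older estimate on $D^{\lfloor r \rfloor}\varphi$ combined with the same exponential decay. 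The symmetric argument using $F^{-k}$ gives $C^r$ regularity along $u$-legs. Since $\widetilde{\cF}^s$ and $\widetilde{\cF}^u$ are transverse foliations with $C^r$-leaves whose joint integral is $\widetilde{\cF}^{su}(x_0)$, Journ\'e's lemma upgrades this leafwise regularity to $C^r$-regularity of $PCF_{x_0}^x$ on the whole $su$-leaf.

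For part (2), the obstacle is that differentiating $\varphi \circ F^{\pm k}$ along $E^c$ produces factors of $DF^{\pm k}|_{E^c}$, which is neither contracted nor expanded and can grow. As $x$ moves along $\widetilde{\cF}^c(x_0)$, the accessible path $\gamma_{x_c}^x$ deforms, its endpoints slide, and its $s$- and $u$-legs tilt. Differentiating the stable part of the series along $E^c$ therefore produces terms of the schematic form
\[
D^i\varphi(F^k(\cdot)) \cdot (DF^k|_{E^s})^{\otimes p} \cdot (DF^k|_{E^c})^{\otimes q}
\]
with $p + q$ controlled by the differentiation order. Strong $r$-bunching
\[
\max\bigl\{\|Df|_{E^s}\|,\ m(Df|_{E^u})^{-1}\bigr\} < \min\bigl\{m(Df|_{E^c})^r,\ \|Df|_{E^c}\|^{-r}\bigr\}
\]
is precisely the inequality that forces $\|DF^k|_{E^s}\|\cdot m(DF^k|_{E^c})^{-r}$ and $\|DF^k|_{E^s}\|\cdot \|DF^k|_{E^c}\|^{-r}$ to decay exponentially in $k$, so each mixed term of order up to $r$ is summable. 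The unstable part is handled symmetrically via $F^{-k}$, and for non-integer $r$ the fractional gap is again closed by a H\"older estimate on $D^{\lfloor r \rfloor}\varphi$ combined with the bunching inequality at the exact exponent $r$.

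The main obstacle is part (2): one has to spell out, via Fa\`a di Bruno, how the accessible legs and their parametrizations deform under central perturbation, and verify that every resulting cross term is dominated by the strong $r$-bunching product. For non-integer $r$ this forces passage to fractional H\"older seminorms on the top-order derivative, and is the step where the $C^r$-section theorem of Hirsch--Pugh--Shub (or an equivalent Journ\'e-type argument across $s$- and $u$-contractions) is really used.
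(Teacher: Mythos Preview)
The paper does not give its own proof of this lemma; it is stated with an attribution to \cite[Lemma~2.7]{Correa2023} and used as a black box. There is therefore no in-paper argument to compare against, and your sketch is effectively an attempt to reconstruct the cited proof.

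Your outline for part~(1) is the standard one (termwise differentiation along $\widetilde{\cF}^s$ and $\widetilde{\cF}^u$, exponential decay of $DF^k|_{E^s}$ and $DF^{-k}|_{E^u}$, then Journ\'e) and is correct.

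For part~(2) you have identified the right ingredient---strong $r$-bunching---but the schematic you write is not quite the object actually being differentiated. The map $x\mapsto PCF_{x_c}^x$ has \emph{both} endpoints moving with $x$: as $x$ slides along $\widetilde{\cF}^c(x_0)$, the point $x_c=\beta^{su,c}_{\widetilde{\cF}}(x,0)$ slides along $\widetilde{\cF}^c(0)$ via the $su$-holonomy, and any intermediate corner $z\in\widetilde{\cF}^s(x_c)\cap\widetilde{\cF}^u(x)$ moves through a one-parameter family of $su$-leaves rather than within a single stable or unstable leaf. So the terms you need to control are not simply $D^i\varphi(F^k(\cdot))\cdot(DF^k|_{E^s})^{\otimes p}\cdot(DF^k|_{E^c})^{\otimes q}$; they also involve the center-direction derivatives of the stable and unstable holonomies. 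The cleaner route---and the one that makes the role of strong bunching transparent---is to first invoke that strong $r$-bunching forces the $su$-holonomy between center leaves (equivalently, the maps $x\mapsto x_c$ and $x\mapsto z$) to be $C^r$ (this is the $C^r$-section theorem you mention at the end), and then combine that with the part~(1) regularity along each $su$-leaf, varying $C^r$-smoothly in the transverse parameter. Your direct Fa\`a di Bruno approach can be pushed through, but only after you fold in these holonomy derivatives, and that is exactly the step where ordinary $r$-bunching is insufficient and the \emph{strong} bunching inequality is used.
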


We shall confirm that
\begin{align*}
    &u\circ F(x) - u(x) = \varphi(x) - \varphi(0),\ \forall x\in \mathbb{R}^3;\\
    &u(x + n) = u(x),\ \forall x\in \mathbb{R}^3,\ \forall n\in\mathbb{Z}^3,
\end{align*}
so that $u$ descends to the solution we need. Indeed, define $S_n: \widetilde{\mathcal{F}}^c(0) \to \widetilde{\mathcal{F}}^c(0)$ by
$$S_n(x) := (x + n)_c := \beta_{\widetilde{\mathcal{F}}}^{su, c}(x + n, 0) := \text{the unique point in } \widetilde{\mathcal{F}}^{su}(x + n)\pitchfork\widetilde{\mathcal{F}}^c(0),$$
and by (\ref{fml: extension}) we have
\begin{align*}
    u\circ F(x) - u(x)
    &= u(F(x)_c) - u(x_c) + PCF_{F(x)_c}^{F(x)}(\varphi, F) - PCF_{x_c}^x(\varphi, F)\\
    &= u(F(x_c)) - u(x_c) + PCF_{F(x_c)}^{F(x)}(\varphi, F) - PCF_{x_c}^x(\varphi, F)\\
    &= u(F(x_c)) - u(x_c) + \varphi(x) - \varphi(x_c);\\
    u(x + n) - u(x) &= u((x + n)_c) - u(x_c) + PCF_{(x + n)_c}^{x + n}(\varphi, F) - PCF_{x_c}^x(\varphi, F)\\
    &= u((x_c + n)_c) - u(x_c) + PCF_{(x_c + n)_c}^{x + n}(\varphi, F) - PCF_{x_c + n}^{x + n}(\varphi, F)\\
    &= u\circ S_n(x_c) - u(x_c) - PCF_{x_c + n}^{S_n(x_c)}(\varphi, F).
\end{align*}

Therefore, it suffices to construct a continuous function $u: \widetilde{\mathcal{F}}^c(0) \to \mathbb{R}$ satisfying
\begin{align}
    &u\circ F(x) - u(x) = \varphi(x) - \varphi(0),\ \forall x\in\widetilde{\mathcal{F}}^c(0); \label{fml: uF-u}\\
    &u\circ S_n(x) - u(x) = PCF_{x + n}^{S_n(x)}(\varphi, F),\ \forall x\in\widetilde{\mathcal{F}}^c(0),\ \forall n\in \mathbb{Z}^3. \label{fml: uSn-u}
\end{align}

Denote $w = u\circ H^{-1}$, $\psi = \varphi\circ H^{-1}$ and $T_n = H\circ S_n\circ H^{-1}$. Since $H$ preserves the foliations, we have
$$T_n(x) = \beta^{su, c}(x + n, 0) := \text{the unique point in } \widetilde{\mathcal{L}}^{su}(x + n)\pitchfork\widetilde{\mathcal{L}}^c(0).$$
By Theorem \ref{thm: PHDA spectrum rigidity}, the equations (\ref{fml: uF-u}) (\ref{fml: uSn-u}) are equivalent with the following equations (\ref{fml: wA-w}) (\ref{fml: wTn-w}) in the linear system.

\begin{align}
    &w\circ A(x) - w(x) = \psi(x) - \psi(0),\ \forall x\in\widetilde{\mathcal{L}}^c(0);\label{fml: wA-w}\\
    &w\circ T_n(x) - w(x) = PCF_{x + n}^{T_n(x)}(\psi, A),\ \forall x\in\widetilde{\mathcal{L}}^c(0),\ \forall n\in \mathbb{Z}^3.\label{fml: wTn-w}
\end{align}

\begin{lemma}\label{lem: T_n}
    Define $T: \mathbb{Z}^3\times\widetilde{\mathcal{L}}^c(0) \to \widetilde{\mathcal{L}}^c(0)$ by $T(n, x) = T_n(x)$.
    \begin{enumerate}
        \item $T$ is a $\mathbb{Z}^3$ action on $\widetilde{\mathcal{L}}^c(0)$, i.e. $T(m + n, x) = T(m, T(n, x))$, $\forall x\in\widetilde{\mathcal{L}}^c(0)$, $\forall m$, $n\in\mathbb{Z}^3$;
        \item The action $T$ is minimal, i.e. $\{T(n, x): n\in\mathbb{Z}^3\}$ is dense in $\widetilde{\mathcal{L}}^c(0)$, $\forall x\in\widetilde{\mathcal{L}}^c(0)$;
        \item By choosing some basis properly, the action is identified with a $\mathbb{Z}^3$ action on $\mathbb{R}$, defined by $T(n, x) = x + n\cdot\alpha$, where $\alpha = (\alpha_1, \alpha_2, \alpha_3)\in\mathbb{R}^3$ with $\alpha_1 = 1$. Moreover, $\alpha_2$ and $\alpha_3$ are both irrational algebraic numbers.
    \end{enumerate}
\end{lemma}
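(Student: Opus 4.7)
My starting point is the observation that $A$ is linear, so both foliations on the universal cover are by affine translates of eigenspaces: $\widetilde{\mathcal{L}}^c(0)=L^c$ and $\widetilde{\mathcal{L}}^{su}(x+n)=(x+n)+L^{su}$. Letting $\pi_c\colon\mathbb{R}^3\to L^c$ denote the linear projection along $L^{su}$, a direct intersection computation gives the clean formula $T_n(x)=x+\pi_c(n)$ for $x\in L^c$. From this formula the first item is immediate: since $\pi_c$ is $\mathbb{Z}$-linear, $T(m+n,x)=x+\pi_c(m)+\pi_c(n)=T(m,T(n,x))$, so $T$ is a $\mathbb{Z}^3$-action by translations on $L^c$.

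For Items (2) and (3), everything reduces to understanding the subgroup $\Gamma:=\pi_c(\mathbb{Z}^3)\subset L^c\cong\mathbb{R}$, which is precisely the translation set of the action. The plan is to extract two facts from the arithmetic of $A$: first, that $\pi_c|_{\mathbb{Z}^3}$ is injective, so $\Gamma$ is a dense rank-$3$ subgroup of $\mathbb{R}$, giving minimality; and second, that after rescaling $L^c$ so that $\pi_c(e_1)$ corresponds to $1\in\mathbb{R}$ (legitimate once we know $\pi_c(e_1)\neq 0$), the ratios $\alpha_i:=\pi_c(e_i)/\pi_c(e_1)$ are algebraic, with $\alpha_1=1$ and $\alpha_2,\alpha_3$ irrational.

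The arithmetic input that drives both of these is irreducibility of the characteristic polynomial $\chi_A\in\mathbb{Z}[x]$. Because $A\in\operatorname{GL}(3,\mathbb{Z})$ is Anosov (as $f$ is a DA-system), $\pm 1$ are not eigenvalues, so $\chi_A$ has no rational root and, being cubic, is irreducible over $\mathbb{Q}$. Hence $A$ admits no nontrivial proper rational invariant subspace of $\mathbb{R}^3$, which forces $L^{su}\cap\mathbb{Q}^3=\{0\}$ (otherwise the $\mathbb{R}$-span of $L^{su}\cap\mathbb{Q}^3$ would be a rational $A$-invariant subspace of dimension $1$ or $2$). This injectivity statement for $\pi_c|_{\mathbb{Z}^3}$ also delivers irrationality: an equality $\alpha_i=p/q$ with $i\neq 1$ would place the nonzero vector $qe_i-pe_1$ in $L^{su}\cap\mathbb{Z}^3$. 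For algebraicity, irreducibility of $\chi_A$ also tells us that all eigenvalues of $A$ lie in a single cubic extension $K/\mathbb{Q}$, so eigenvectors can be chosen with coordinates in $K$, and hence the matrix of $\pi_c$ has entries in $K$, making each $\alpha_i$ algebraic.

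The whole argument is essentially linear algebra combined with one arithmetic ingredient. The one step that actually does work is the deduction that an Anosov integer $3\times 3$ matrix has irreducible characteristic polynomial and therefore no proper rational invariant subspaces; I regard this as the main technical point, though it is fairly standard. The rest is bookkeeping about the projection $\pi_c$ and the action it induces on $L^c$.
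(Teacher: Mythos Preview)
Your argument is correct and follows essentially the same route as the paper: both compute $T_n(x)=x+\pi_c(n)$ via the linear projection onto $L^c$ along $L^{su}$, and both invoke the irreducibility of $A$ over $\mathbb{Q}$ to control the image $\pi_c(\mathbb{Z}^3)$. The paper is terser---it cites \cite{Correa2023} for items (1)--(2) and asserts irreducibility and the irrationality/algebraicity of the $\alpha_i$ without justification---whereas you supply these details explicitly (no rational root of $\chi_A$ by the rational root theorem and hyperbolicity, hence no proper rational invariant subspace, hence $\ker\pi_c\cap\mathbb{Z}^3=\{0\}$); this makes your version self-contained but not genuinely different in strategy.
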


\begin{proof}
    The proof of the first item and the second item follows from \cite[Theorem 3.9]{Correa2023}. We mainly explain the third item.

    Let $\{e_1, e_2, e_3\}$ be the standard basis of $\mathbb{R}^3$, and let $v_j\in\widetilde{\mathcal{L}}^c(0)$ be the projection of $e_j$ with respect to the direct sum $\mathbb{R}^3 = \widetilde{\mathcal{L}}^s(0)\oplus\widetilde{\mathcal{L}}^c(0)\oplus\widetilde{\mathcal{L}}^u(0)$. It follows that
    \begin{align*}
        &T_{e_j}(x) = x + v_j,\\
        &T_n(x) = x + n\cdot(v_1, v_2, v_3).
    \end{align*}
    Since $A\in\text{Aut}(\mathbb{T}^3)$ is irreducible, we have that $v_j\not= 0$. Take $\alpha = (\alpha_1, \alpha_2, \alpha_3)\in\mathbb{R}^3$ such that $v_j = \alpha_j v_1$. Then $\alpha_1 = 1$ and $\alpha_2$, $\alpha_3$ are irrational algebraic numbers. Now identify $\widetilde{\mathcal{L}}^c(0)$ with $\mathbb{R}v_1$, we have $T_n(x) = x + n\cdot\alpha$.
\end{proof}

\begin{lemma}\label{lem: Tn to A}
    Assume that a continuous function $w: \widetilde{\mathcal{L}}^c(0) \to \mathbb{R}$ satisfies {\rm(\ref{fml: wTn-w})}. Then $w$ also satisfies {\rm(\ref{fml: wA-w})}.
\end{lemma}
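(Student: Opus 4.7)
The plan is to introduce the ``error function''
$$v(x) := w \circ A(x) - w(x) - \psi(x) + \psi(0), \qquad x \in \widetilde{\mathcal{L}}^c(0),$$
which is well defined and continuous because $A$ preserves $\widetilde{\mathcal{L}}^c(0)$ (the subspace $\widetilde{L}^c$ is $A$-invariant and contains $0$), and to show that $v\equiv 0$. The two structural inputs making this work are: (i) continuity of $v$ together with minimality of the $\mathbb{Z}^3$-action $T$ from Lemma \ref{lem: T_n}(2), which reduces the problem to proving that $v$ is $T$-invariant; and (ii) the identity $v(0)=0$, which is immediate from $A(0)=0$.

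The first key step is the commutation relation $A\circ T_n = T_{An}\circ A$ on $\widetilde{\mathcal{L}}^c(0)$ for every $n\in\mathbb{Z}^3$. I would verify this by writing $T_n(x)=(x+n)+s$ with $s\in\widetilde{L}^{su}$; applying $A$ and using $A$-invariance of $\widetilde{L}^{su}$ yields $AT_n(x)=Ax+An+As$ with $As\in\widetilde{L}^{su}$. Since $A$ has integer entries we have $An\in\mathbb{Z}^3$, and since $T_n(x)\in\widetilde{\mathcal{L}}^c(0)$ we also have $AT_n(x)\in\widetilde{\mathcal{L}}^c(0)$. Thus $AT_n(x)$ is the unique point of $\widetilde{\mathcal{L}}^{su}(Ax+An)\cap\widetilde{\mathcal{L}}^c(0)$, namely $T_{An}(Ax)$.

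Next I would compute $v(T_n(x))-v(x)$. Two applications of the hypothesis (\ref{fml: wTn-w})---at $x$ with index $n$, and at $Ax$ with index $An$---combined with the commutation $AT_n=T_{An}A$ reduce the $w$-contribution to
$$PCF_{Ax+An}^{T_{An}(Ax)}(\psi,A) \;-\; PCF_{x+n}^{T_n(x)}(\psi,A).$$
Applying Lemma \ref{lem: properties of PCF}(2) to an accessible sequence $\gamma$ from $x+n$ to $T_n(x)$ (whose image $A(\gamma)$ is an accessible sequence from $Ax+An$ to $T_{An}(Ax)$) rewrites this difference as $\psi(T_n(x))-\psi(x+n)$, and the $\mathbb{Z}^3$-periodicity of $\psi=\varphi\circ H^{-1}$ (inherited from $\varphi$ because $H$ commutes with $\mathbb{Z}^3$) replaces $\psi(x+n)$ by $\psi(x)$. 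The remaining $\psi$-contribution in $v(T_n(x))-v(x)$ is exactly $-(\psi(T_n(x))-\psi(x))$, so everything cancels and $v(T_n(x))=v(x)$.

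With $T$-invariance established, continuity of $v$ and minimality of $T$ (Lemma \ref{lem: T_n}(2)) force $v$ to be constant on $\widetilde{\mathcal{L}}^c(0)$, and $v(0)=0$ pins this constant to $0$, which is precisely (\ref{fml: wA-w}). I do not expect a serious obstacle here: the heart of the argument is bookkeeping the commutation $AT_n=T_{An}A$ and the $PCF$ cocycle identity under $A$, after which minimality plus continuity do the rest. The Diophantine-flavored analysis does not enter at this step---it will only be needed later when actually constructing a $w$ on $\widetilde{\mathcal{L}}^c(0)$ satisfying (\ref{fml: wTn-w}).
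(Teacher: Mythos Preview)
Your proposal is correct and follows essentially the same route as the paper. The paper sets $w_A := w\circ A - w$ and shows $w_A - \psi$ is $T$-invariant via exactly the chain you describe (the commutation $A\circ T_n = T_{An}\circ A$, two uses of (\ref{fml: wTn-w}), Lemma~\ref{lem: properties of PCF}(2), and $\mathbb{Z}^3$-periodicity of $\psi$), then invokes minimality and continuity; your $v$ is just $w_A - \psi + \psi(0)$, and you are slightly more explicit in verifying the commutation and in evaluating at $0$ to identify the constant.
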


\begin{proof}
    Denote $w_A = w\circ A - w$. We have
    \begin{align*}
        w_A\circ T_n(x) - w_A(x)
        &= w\circ A\circ T_n(x) - w\circ T_n(x) - w\circ A(x) + w(x)\\
        &= w\circ T_{An}\circ A(x) - w\circ A(x) - PCF_{x + n}^{T_n(x)}(\psi, A)\\
        &= (PCF_{Ax + An}^{T_{An}(A(x))} - PCF_{x + n}^{T_n(x)})(\psi, A)\\
        &= (PCF_{A(x + n)}^{A(T_n(x))} - PCF_{x + n}^{T_n(x)})(\psi, A)\\
        &= \psi\circ T_n(x) - \psi(x + n)\\
        &= \psi\circ T_n(x) - \psi(x).
    \end{align*}
    Hence $w_A - \psi$ is constant in each orbit of $T_n$. Now $w_A - \psi$ is continuous, and by Lemma \ref{lem: T_n}, $T_n$ is minimal, hence $w_A - \psi$ is constant and the conclusion holds.
\end{proof}

A {\it cocycle} over $T(n, x) = x + n\cdot\alpha: \mathbb{Z}^3\times\mathbb{R} \to \mathbb{R}$, is a map $\Psi: \mathbb{Z}^3\times\mathbb{R} \to \mathbb{R}$ satisfying the following cocycle condition:
$$\Psi(n + m, x) = \Psi(m, x) + \Psi(n, T(m, x)),\ \forall x\in\mathbb{R},\ \forall n,\ m \in\mathbb{Z}^3.$$

\begin{theorem}\label{thm: equation for Tn}
    Given $r > 1$, for every $C^r$-smooth cocycle $\Psi(n, x): \mathbb{Z}^3\times\mathbb{R} \to \mathbb{R}$ over $T$, the cohomological equation $\Psi(n, x) = w\circ T(n, x) - w(x) + c(n)$ has a continuous solution $w: \mathbb{R} \to \mathbb{R}$ for some group homomorphsim $c: \mathbb{Z}^3 \to \mathbb{R}$. Moreover, the solution is $C^\gamma$-smooth for any $\gamma < r - 1$.
\end{theorem}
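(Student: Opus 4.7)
I'll plan the proof in three main steps, splitting $w$ as $w = w_1 + w_2$, where $w_1$ handles the equation for the generator $e_1$ (integer translation on $\mathbb{R}$) and $w_2$ solves the remaining $\mathbb{Z}^2$-cocycle over the Diophantine rotation of $\mathbb{T}=\mathbb{R}/\mathbb{Z}$ by $(\alpha_2,\alpha_3)$ via Fourier analysis. Writing $\Psi_j(x):=\Psi(e_j,x)$, the first step is to construct a $C^r$ function $w_1:\mathbb{R}\to\mathbb{R}$ satisfying
\[
w_1(x+1)-w_1(x)=\Psi_1(x)-c_1
\]
for a constant $c_1$. This is possible because the equation is undetermined: prescribe $w_1$ as a $C^r$ function on $[0,1]$ whose derivatives at the endpoints are chosen (via Hermite interpolation) so that $w_1^{(k)}(1)-w_1^{(k)}(0)=(\Psi_1-c_1)^{(k)}(0)$ for all $k\le r$, then extend globally by $w_1(x+n)=w_1(x)+\sum_{j=0}^{n-1}[\Psi_1(x+j)-c_1]$.

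The second step is to define $\Psi'_j(x):=\Psi_j(x)-[w_1(x+\alpha_j)-w_1(x)]$ for $j=2,3$. A direct computation using the cocycle identity $\Psi_j(x+1)-\Psi_j(x)=\Psi_1(x+\alpha_j)-\Psi_1(x)$ together with the $e_1$-equation for $w_1$ shows that $\Psi'_j$ is $1$-periodic, and hence $(\Psi'_2,\Psi'_3)$ descends to a $\mathbb{Z}^2$-cocycle over the rotation by $(\alpha_2,\alpha_3)$ on $\mathbb{T}$. Setting $c_j:=\int_\mathbb{T}\Psi'_j\,dx$, I would solve
\[
w_2(x+\alpha_j)-w_2(x)=\Psi'_j(x)-c_j,\quad j=2,3,
\]
on $\mathbb{T}$ via Fourier series by $\hat{w_2}(k)=\hat{\Psi'_j}(k)/(e^{2\pi ik\alpha_j}-1)$ for $k\neq 0$. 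By Roth's theorem (Lemma \ref{lem: algebraic diophantine}) applied to the algebraic irrationals $\alpha_2,\alpha_3$, we have $|e^{2\pi ik\alpha_j}-1|\gtrsim|k|^{-\delta}$ for every $\delta>0$; combined with the $C^r$-decay $|\hat{\Psi'_j}(k)|\lesssim|k|^{-r}$, this yields $|\hat{w_2}(k)|\lesssim|k|^{-(r-\delta)}$, hence $w_2\in C^{r-1-\delta}(\mathbb{T})$.

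Finally, set $w:=w_1+w_2$ (with $w_2$ lifted to $\mathbb{R}$ by $1$-periodicity) and let $c:\mathbb{Z}^3\to\mathbb{R}$ be the homomorphism with $c(e_j):=c_j$. The three generator equations for $w$ hold by construction, and the cocycle property extends them to all $n\in\mathbb{Z}^3$. The regularity $w\in C^\gamma$ for any $\gamma<r-1$ follows from $w_1\in C^r$ and $w_2\in C^{r-1-\delta}$ for every $\delta>0$. The main obstacle in this plan is verifying in the second step that a single function $w_2$ simultaneously solves both the $j=2$ and $j=3$ equations on $\mathbb{T}$; this reduces, Fourier-coefficient by Fourier-coefficient, to the identity $\hat{\Psi'_2}(k)(e^{2\pi ik\alpha_3}-1)=\hat{\Psi'_3}(k)(e^{2\pi ik\alpha_2}-1)$ for all $k\neq 0$, which is exactly the Fourier transform of the $\mathbb{Z}^2$-cocycle relation $\Psi'_2(\cdot+\alpha_3)-\Psi'_2=\Psi'_3(\cdot+\alpha_2)-\Psi'_3$.
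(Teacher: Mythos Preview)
Your approach is essentially the same as the paper's: both reduce to a $1$-periodic cocycle by first solving the $e_1$-equation (you build $w_1$ by Hermite interpolation, the paper builds a gauge $v$ via a bump function, and in fact $w_1=-v$ when $c_1=0$), and then handle the remaining periodic cocycle by Fourier series. Your verification that $\Psi'_2,\Psi'_3$ descend to a $\mathbb{Z}^2$-cocycle on $\mathbb{T}$ and that a single $w_2$ solves both equations (via the Fourier-coefficient consistency identity) is exactly the paper's Claim~\ref{clm: equation for Tn} specialized to the generators $e_2,e_3$.

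There is, however, a slip in your regularity estimate. Roth's theorem (Lemma~\ref{lem: algebraic diophantine}) gives $|q\alpha_j-p|\gtrsim|q|^{-(1+\delta)}$, hence $|e^{2\pi ik\alpha_j}-1|\gtrsim|k|^{-(1+\delta)}$, not $|k|^{-\delta}$. With this correction your naive bound becomes $|\widehat{w_2}(k)|\lesssim|k|^{-(r-1-\delta)}$, and summing $\sum_k|k|^{\gamma}|\widehat{w_2}(k)|$ only yields $w_2\in C^\gamma$ for $\gamma<r-2$. To recover the full range $\gamma<r-1$ you need Arnold's small-denominator lemma (Lemma~\ref{lem: small denominator}) rather than the pointwise Diophantine bound: although the individual estimate $|k\alpha_j-p(k)|^{-1}\lesssim|k|^{1+\delta}$ cannot be improved, the bad frequencies are sparse enough that $\sum_{k}|k|^{-(1+\varepsilon)}|k\alpha_j-p(k)|^{-1}$ already converges for every $\varepsilon>\delta$, which buys back exactly one power of $|k|$ in the summability argument.
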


\begin{proof}
    First we show that $\Psi(n, x)$ is $C^r$-cohomologous to a $C^r$ cocycle $\bar{\Psi}(n, x)$ which is 1-periodic with respect to $x\in\mathbb{R}$, in the sense that
    \begin{align}
        &\bar{\Psi}(n, x) = \Psi(n, x) + v\circ T_n(x) - v(x),\label{fml: periodic cocycle}\\
        &\bar{\Psi}(n, x + 1) = \bar{\Psi}(n, x)\label{fml: cocycle periodic}
    \end{align}
    for some $C^r$-smooth function $v: \mathbb{R} \to \mathbb{R}$.
    
    Let $b: [0, 1] \to [0, 1]$ be a $C^\infty$-smooth function such that $b(x) = 0$ when $x\in[0, \frac{1}{3}]$ and $b(x) = 1$ when $x\in [\frac{2}{3}, 1]$. Define $v: [0, 1) \to \mathbb{R}$ by
    $$v(x) = -b(x)\Psi(e_1, x - 1),\ \forall x\in[0, 1).$$
    Then $v$ is extended to a $C^r$-smooth function on $\mathbb{R}$, by
    \begin{align}
        v(x + l) = v(x) - \Psi(le_1, x),\ \forall x\in[0, 1),\ \forall l \in \mathbb{Z}.\label{fml: v}
    \end{align}
    Note that (\ref{fml: v}) actually holds for every $x\in\mathbb{R}$, because
    \begin{align*}
        v(x + l) &= v(\{x\}) - \Psi(([x] + l)e_1, \{x\})\\
        &= v(\{x\}) - \Psi([x]e_1, \{x\}) - \Psi(le_1, \{x\} + [x]e_1\cdot\alpha)\\
        &= v(x) - \Psi(le_1, x).
    \end{align*}
    Define $\bar{\Psi}$ as {\rm(\ref{fml: periodic cocycle})}. Then $\bar{\Psi}$ is a cocycle over $T$:
    \begin{align*}
        &\bar{\Psi}(m, x) + \bar{\Psi}(n, T_m(x))\\
        =& \Psi(m, x) + \Psi(n, T_m(x)) + v\circ T_m(x) - v(x) + v\circ T_n\circ T_m(x) - v\circ T_m(x)\\
        =& \Psi(m + n, x) + v\circ T_{m + n}(x) - v(x)\\
        =& \bar{\Psi}(n + m, x);
    \end{align*}
    and the condition {\rm(\ref{fml: cocycle periodic})} is also satisfied:
    \begin{align*}
        \bar{\Psi}(n, x + 1)
        &= \Psi(n, x + 1) + v(x + 1 + n\cdot\alpha) - v(x + 1)\\
        &=\Psi(n, x + e_1\cdot\alpha) + v(x + n\cdot\alpha) - \Psi(e_1, x + n\cdot \alpha) - v(x) + \Psi(e_1, x)\\
        &=\Psi(n, x) + v(x + n\cdot\alpha) - v(x)\\
        &=\bar{\Psi}(n, x).
    \end{align*}
    \begin{claim}\label{clm: equation for Tn}
        The cohomological equation
        \begin{align}
            \bar{\Psi}(n, x) = \bar{w}\circ T_n(x) - \bar{w}(x) + c(n)\label{fml: periodic cohomological equation}
        \end{align}
        has a solution $\bar{w}: \mathbb{R} \to \mathbb{R}$ for the group homomorphism $c(n) = \int_0^1\bar{\Psi}(n, x)dx: \mathbb{Z}^3 \to \mathbb{R}$. Moreover, $\bar{w}$ is $C^\gamma$-smooth for any $\gamma < r - 1$.
    \end{claim}

    \begin{proof}[Proof of Claim \ref{clm: equation for Tn}]
        Consider the Fourier series $\bar{\Psi}(n, x) = \sum_{l\in\mathbb{Z}}\widehat{\Psi}_{n, l}e^{2\pi ilx}$. Then the cocycle condition $\bar{\Psi}(n + m, x) = \bar{\Psi}(m, x) + \bar{\Psi}(n, x + m\cdot\alpha)$ becomes
        $$\widehat{\Psi}_{n + m, l} = \widehat{\Psi}_{n, l}e^{2\pi il(m\cdot\alpha)} + \widehat{\Psi}_{m, l},\ \forall l\in\mathbb{Z},\ \forall m,\ n\in\mathbb{Z}^3.$$
        Exchange the position of $m$ and $n$, we have
        $$\widehat{\Psi}_{m, l}\left(e^{2\pi il(n\cdot\alpha)} - 1\right) = \widehat{\Psi}_{n, l}\left(e^{2\pi il(m\cdot\alpha)} - 1\right),\ \forall l \in\mathbb{Z}\setminus\{0\},\ \forall m,\ n\in\mathbb{Z}^3.$$
        Now define
        $$\widehat{w}_l := \frac{\widehat{\Psi}_{e_2, l}}{e^{2\pi il(e_2\cdot\alpha)} - 1},\ \forall l \in\mathbb{Z}\setminus\{0\}.$$
        It follows that
        $$\bar{w}(x) := \sum_{l\in\mathbb{Z}\setminus\{0\}}\widehat{w}_le^{2\pi ilx}$$
        solves the cohomological equation (\ref{fml: periodic cohomological equation}) for $c(n) = \widehat{\Psi}_{n, 0}$, as long as it converges to some continuous function.
    
        Note that $\bar{\Psi}$ is $C^r$-smooth, and hence $|\widehat{\Psi}_{e_2, l}| \leq C|l|^{-r}$. By Lemma \ref{lem: algebraic diophantine}, we have that for any $\delta > 0$,
        $$|e^{2\pi il\alpha_2} - 1| \geq 4\min_{p\in\mathbb{Z}}|l\alpha_2 - p| \geq C(\alpha_2, \delta)|l|^{-(1 + \delta)}.$$
        By Lemma \ref{lem: small denominator}, the series $\sum_{l\in\mathbb{Z}\setminus\{0\}}|l|^\gamma|\widehat{w}_l|$ converges for any $\gamma < r - 1 - \delta$, and hence the solution $\bar{w}$ is $C^\gamma$-smooth for any $\gamma < r - 1$.

        The term
        $$c(n) = \widehat{\Psi}_{n, 0} = \int_0^1\bar{\Psi}(n, x)dx$$
        is a group homomorphism because of the cocycle condition of $\bar{\Psi}$:
        $$c(m + n) = \int_0^1\bar{\Psi}(m + n, x)dx = \int_0^1\left(\bar{\Psi}(m, x) + \bar{\Psi}(n, x + m\cdot\alpha))\right)dx = c(m) + c(n).$$
        This ends the proof of Claim \ref{clm: equation for Tn}.
    \end{proof}
    Finally, the solution of $\Psi(n, x) = w\circ T_n(x) - w(x) + c(n)$ is given by $w = \bar{w} - v$, which is $C^\gamma$-smooth for any $\gamma < r - 1$, and a group homomorphism $c(n)$.
\end{proof}

\begin{proof}[Proof of Theorem \ref{intro thm: PHDA}]
    By Lemma \ref{lem: properties of PCF}, having trivial periodic cycle functional is necessary for the cohomological equation to admit a continuous solution. Therefore, we mainly focus on the sufficiency of trivial periodic cycle functional. Note that $f$ is transitive, and hence the continuous solution to the cohomological equation (\ref{fml: cohomological equation}) is unique up to an additive constant. Therefore, to show the first item and the second item, it suffices to construct a H\"older continuous solution.
    
    Define $\Psi: \mathbb{Z}^3\times\widetilde{\mathcal{L}}^c(0) \to \widetilde{\mathcal{L}}^c(0)$ by $\Psi(n, x) := PCF_{x + n}^{T_n(x)}(\psi, A)$. By Theorem \ref{thm: PHDA spectrum rigidity} and Lemma \ref{lem: regularity}, $\psi: \widetilde{\mathcal{L}}^c(0) \to \mathbb{R}$ is $C^{1+}$-smooth, and hence $\Psi$ is also $C^{1+}$-smooth. Moreover, $\Psi$ is a cocycle over $T$, because
    \begin{align*}
        \Psi(m, x) + \Psi(n, T_m(x)) &= PCF_{x + m}^{T_m(x)}(\psi, A) + PCF_{T_m(x) + n}^{T_n(T_m(x))}(\psi, A)\\
        &= PCF_{x + m + n}^{T_m(x) + n}(\psi, A) + PCF_{T_m(x) + n}^{T_n(T_m(x))}(\psi, A)\\
        &= PCF_{x + m + n}^{T_{m + n}(x)}(\psi, A) = \Psi(m + n, x).
    \end{align*}
    By Theorem \ref{thm: equation for Tn}, the cohomological equation
    $$\Psi(n, x) = w\circ T(n, x) - w(x) + c(n)$$
    has a H\"older continuous solution $w: \widetilde{\mathcal{L}}^c(0) \to \mathbb{R}$ for some group homomorphism $c(n)$.
    \begin{claim}\label{clm: cn=0}
        The group homomorphism $c(n) \equiv 0$.
    \end{claim}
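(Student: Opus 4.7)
The plan is to fix $n \in \mathbb{Z}^3$ and extract a growth-rate contradiction by iterating the cohomological equation. Applying the cocycle relation for $\Psi$ and iterating the identity $\Psi(n,x) = w \circ T_n(x) - w(x) + c(n)$ along the $T_n$-orbit of $0$ gives
\[
\Psi(Kn, 0) = w(T_{Kn}(0)) - w(0) + K c(n) \quad \text{for every } K \in \mathbb{Z}^+.
\]
If one can show that both $|\Psi(Kn, 0)|$ and $|w(T_{Kn}(0))|$ grow at most logarithmically in $K$, then $K|c(n)| = O(\log K)$ will force $c(n) = 0$.

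For the bound on $\Psi(Kn, 0) = PCF_{Kn}^{T_{Kn}(0)}(\psi, A)$, the two endpoints differ by the projection of $Kn$ onto $\widetilde{L}^s \oplus \widetilde{L}^u$, which has norm $O(K)$. Hence the PCF can be computed along one stable segment followed by one unstable segment, each of length $O(K)$. A standard telescoping argument, using H\"older continuity of $\psi$ together with the exponential contraction of $A$ on $\widetilde{L}^s$ (and the corresponding expansion on $\widetilde{L}^u$ for the backward sum), shows that a stable or unstable segment of length $L$ contributes at most $O(1 + \log L)$ to the PCF, so $|\Psi(Kn, 0)| = O(\log K)$.

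For the bound on $w(T_{Kn}(0))$, a generic continuous solution of the cocycle equation over a minimal action has no useful growth estimate, so I will use the explicit construction $w = \bar{w} - v$ from the proof of Theorem \ref{thm: equation for Tn}. The function $\bar{w}$ is $1$-periodic and continuous, hence globally bounded. The function $v$ is bounded on $[0,1)$ and satisfies $v(x + l) = v(x) - \Psi(le_1, x)$ by (\ref{fml: v}); applying the previous PCF bound to $\Psi(le_1, \cdot)$ (whose $su$-extent is $O(|l|)$) yields $|v(y)| = O(\log |y|)$ as $|y| \to \infty$. Since $T_{Kn}(0) = K(n \cdot \alpha) v_1$ with $n \cdot \alpha \neq 0$ for every nonzero $n \in \mathbb{Z}^3$ (because the characteristic polynomial of the hyperbolic automorphism $A$ is irreducible over $\mathbb{Q}$, so $\widetilde{L}^s \oplus \widetilde{L}^u$ contains no nonzero integer vector), we have $|T_{Kn}(0)| = O(K)$ and therefore $|w(T_{Kn}(0))| = O(\log K)$.

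The main obstacle is the second growth estimate: one cannot use an abstract continuous solution, and must instead exploit the specific Fourier-series construction of Theorem \ref{thm: equation for Tn}, since the key input is the boundedness of the periodized $\bar{w}$ together with the explicit correction $v$ whose growth is controlled by $\Psi(l e_1, \cdot)$ itself. Combining the two logarithmic bounds yields $K|c(n)| \le |\Psi(Kn, 0)| + |w(T_{Kn}(0))| + |w(0)| = O(\log K)$, forcing $c(n) = 0$ for every $n \in \mathbb{Z}^3$, as claimed.
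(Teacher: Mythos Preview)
Your proof is correct and follows essentially the same strategy as the paper. Both arguments rest on the same two estimates: (i) $\|\Psi(m,\cdot)\|=O(\log|m|)$, obtained by splitting the PCF into one stable and one unstable leg and using H\"older continuity plus exponential contraction; and (ii) $|v(y)|=O(\log|y|)$, which follows from (i) via the recursion $v(x+l)=v(x)-\Psi(le_1,x)$. The only cosmetic difference is in the final packaging: the paper plugs these bounds into the integral formula $c(n)=\int_0^1\bar\Psi(n,x)\,dx=\int_0^1(\Psi(n,x)+v\circ T_n(x)-v(x))\,dx$ to get $c(n)=O(\log|n|)$ directly, and then observes that a homomorphism of logarithmic growth vanishes; you instead evaluate the cocycle equation at $x=0$ along the ray $\{Kn\}$ and use $w=\bar w-v$ pointwise to reach $K|c(n)|=O(\log K)$. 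These are the same argument read in two equivalent ways. (Incidentally, the digression about $n\cdot\alpha\neq 0$ is unnecessary: if $n\cdot\alpha=0$ then $T_{Kn}(0)=0$ and $w(T_{Kn}(0))$ is simply constant, which only makes the bound easier.)
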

    \begin{proof}[Proof of Claim \ref{clm: cn=0}]
        By definition we have
        $$\Psi(n, x) = PCF_{x + n}^{T'_n(x)}(\psi, A) + PCF_{T'_n(x)}^{T_n(x)}(\psi, A),$$
        where
        $$T'_n(x) := \beta^{s, u}(T_n(x), x + n) =  \text{the unique point in } \widetilde{\mathcal{L}}^s(T_n(x))\pitchfork\widetilde{\mathcal{L}}^u(x + n).$$
        There exist $C > 0$ and $0 < \theta < 1$ such that $|\psi(x) - \psi(y)| \leq C|x - y|^\theta$, $\forall x$, $y\in\mathbb{R}^3$. Therefore,
        \begin{align*}
            \left|PCF_{T'_n(x)}^{T_n(x)}(\psi, A)\right| &\leq \sum_{k = 0}^{+\infty}\left|\psi(A^k(T_n(x))) - \psi(A^k(T'_n(x)))\right|\\
            &\leq 2k_0\|\psi\| + C\sum_{k = k_0}^{+\infty}\lambda_s^{\theta k}|T_n(x) - T'_n(x)|^\theta\\
            &= 2k_0\|\psi\| + \frac{C}{1 - \lambda_s^\theta}\lambda_s^{\theta k_0}|T_n(x) - T'_n(x)|^\theta
        \end{align*}
        Here $0 < \lambda_s < 1$ is the eigenvalue of $A|_{\widetilde{L}^s}$. Take
        $$k_0 = \left[\log_{\lambda_s}\frac{1}{|T_n(x) - T'_n(x)|}\right] + 1.$$
        Since $\sup_x|T_n(x) - T'_n(x)| = O(|n|)$ as $|n|\to +\infty$, we have that
        $$\left|PCF_{T'_n(x)}^{T_n(x)}(\psi, A)\right| \leq (1 + k_0)C = O(\log|n|) \text{ as } |n| \to +\infty.$$
        Similar conclusion holds for $PCF_{x + n}^{T'_n(x)}(\psi, A)$. Therefore,
        \begin{align*}
            \|\Psi(n, \cdot)\| = O(\log|n|) \text{ as } |n|\to +\infty.
        \end{align*}
        By (\ref{fml: v}), we have that $|v(x)| = O(\log|x|)$ as $|x| \to +\infty$. Hence by (\ref{fml: periodic cocycle}) and Claim \ref{clm: equation for Tn},
        $$c(n) = \int_0^1\bar{\Psi}(n, x)dx = \int_0^1 \left(\Psi(n, x) + v\circ T_n(x) - v(x)\right)dx = O(\log|n|) \text{ as } |n|\to +\infty.$$
        However, $c(n): \mathbb{Z}^3 \to \mathbb{R}$ is a homomorphism. It follows that $c(n)\equiv 0$.
    \end{proof}
    Now we have a H\"older continuous solution to (\ref{fml: wTn-w}). By Lemma \ref{lem: Tn to A}, $w$ is also a solution to (\ref{fml: wA-w}). It follows that $u:= w\circ H: \widetilde{\mathcal{F}}^c(0) \to \mathbb{R}$ is a H\"older continuous solution to (\ref{fml: uF-u}) and (\ref{fml: uSn-u}), and its extension $u: \mathbb{R}^3 \to \mathbb{R}$ given by (\ref{fml: extension}) is H\"older continuous by Lemma \ref{lem: regularity}, descending to the solution we need.

    Finally, assume that $f$ and $\varphi$ are $C^k$-smooth, $f$ is $k$-bunched and strongly $r$-bunched. By Theorem \ref{thm: PHDA spectrum rigidity}, the central leaves are $C^k$-smooth and the conjugacy $H$ is $C^k$-smooth along central leaves. It follows that $\psi := \varphi\circ H^{-1}$ is $C^k$-smooth along central leaves, and hence $\Psi$ is a $C^k$-smooth cocycle. By Theorem \ref{thm: equation for Tn}, the solutions $w$ and $u$ on the fixed central leaves are $C^\gamma$-smooth for any $\gamma < k - 1$. By Journ\'e's Theorem \cite{Journe1988} and Lemma \ref{lem: regularity}, the extension $u: \mathbb{R}^3 \to \mathbb{R}^3$ is $C^r$-smooth.
\end{proof}

\section{Cohomological equations for AB-systems}\label{section: AB-Systems}
In this section we prove Theorem \ref{intro thm: PHAB}. Since $f$ is topologically conjugate to $A_\alpha$, it has a fixed central leaf. Without loss of generality, assume that $\mathcal{F}^c(0) := \mathcal{F}^c(0, 0)$ is a fixed central leaf of $f$.

Given a continuous function $u: \mathcal{F}^c(0) \to \mathbb{R}$, we extend it to a function $u: M_B \to \mathbb{R}$ by
$$u(x) = u(x_c) + PCF_{x_c}^{x}(\varphi, f).$$
Here
$$x_c = \beta^{su, c}_{\mathcal{F}}(x, 0) := \text{the unique point in } \mathcal{F}^{su}(x)\pitchfork\mathcal{F}^c(0).$$
The function is well-defined since $\varphi$ has trivial periodic cycle functional. Now we have
\begin{align*}
    u\circ f(x) - u(x) &= u(f(x)_c) + PCF_{f(x)_c}^{f(x)}(\varphi, f) - u(x_c) - PCF_{x_c}^x(\varphi, f)\\
    &= u\circ f(x_c) - u(x_c) + \varphi(x) - \varphi(x_c),
\end{align*}
hence it suffices to construct a continuous function $u: \mathcal{F}^c(0) \to \mathbb{R}$ satisfying
$$u\circ f(t) - u(t) = \varphi(t) + c,\ \forall t\in\mathcal{F}^c(0).$$

Let $h$ be the conjugacy from $f$ to $A_\alpha$. Lemma \ref{lem: regularity of circle diffeomorphism} concludes the regularity of $h$ along $\mathcal{F}^c(0)$.
\begin{lemma}\label{lem: regularity of circle diffeomorphism}
    {\rm\cite{Teplinsky2009}} Fix $k > 2$ and $1 < \tau < k - 1$. Any $C^k$-smooth circle diffeomorphism with a $(C, \tau)$-Diophantine rotation number is $C^{k - \tau - \varepsilon}$-smoothly conjugate to the rigid rotation. Here $\varepsilon > 0$ can be arbitrarily small.
\end{lemma}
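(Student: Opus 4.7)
The plan is to follow the classical Herman--Yoccoz framework, sharpened with the regularity estimates of Katznelson--Ornstein and Khanin--Sinai on which Teplinsky's explicit loss of $\tau$ derivatives is based. The starting point is Denjoy's theorem: any $C^2$-smooth orientation-preserving circle diffeomorphism with irrational rotation number $\alpha$ is topologically conjugate to the rigid rotation $R_\alpha$, yielding a homeomorphism $h$ with $h \circ f = R_\alpha \circ h$. The task is to promote $h$ to a $C^{k-\tau-\varepsilon}$-diffeomorphism under the stated Diophantine and smoothness assumptions.

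The argument proceeds in three stages. First, I would derive distortion estimates for the iterates $Df^{q_n}$ on the dynamical partitions indexed by the denominators $q_n$ of the continued-fraction convergents of $\alpha$; the Diophantine assumption $|q\alpha - p| > C|q|^{-\tau}$ translates into polynomial control $q_{n+1} \leq C' q_n^{\tau}$ on the growth of these denominators, while the $C^k$-smoothness of $f$ yields uniform cross-ratio distortion bounds. These estimates already imply that $\log h'$ is bounded and $h$ is bi-Lipschitz. Second, I would set up a renormalization scheme: the rescaled return maps $f_n$ to the fundamental intervals form a sequence of circle maps whose rotation numbers are the shifted continued-fraction tails of $\alpha$. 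Combining the distortion estimates with the $C^k$-smoothness of $f$, one shows that these renormalizations converge geometrically in a $C^{k-\tau-\varepsilon}$-norm to the corresponding affine maps. Third, a standard argument translates $C^{k-\tau-\varepsilon}$-convergence of the renormalizations into $C^{k-\tau-\varepsilon}$-smoothness of the conjugacy $h$, with the $\varepsilon$ absorbing H\"older-interpolation slack across scales.

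The main obstacle is the small-divisor problem. Linearizing the conjugacy equation around $R_\alpha$ produces a cohomological equation whose formal Fourier-series solution has denominators $e^{2\pi i l\alpha} - 1$ bounded below only by $c|l|^{-\tau}$ under the Diophantine hypothesis, exactly as in Lemma \ref{lem: small denominator}. Inverting this operator costs $\tau$ derivatives, which is precisely the origin of the exponent $k-\tau-\varepsilon$. In the nonlinear renormalization scheme this cost reappears as the rate at which high-frequency fluctuations of $f$ must be controlled at successive renormalization scales, and the key technical point is that the telescoping sum of errors across scales converges under the Diophantine condition, matching exactly the loss of $\tau$ derivatives predicted by the linearized analysis.
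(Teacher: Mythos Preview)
The paper does not prove this lemma at all: it is stated with a citation to \cite{Teplinsky2009} and used as a black box. There is therefore no ``paper's own proof'' to compare your proposal against.

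That said, your outline is a reasonable high-level summary of the Khanin--Teplinsky argument (and the Herman--Yoccoz tradition it refines): Denjoy gives a topological conjugacy, cross-ratio distortion controls the renormalizations, the Diophantine bound $q_{n+1}\le C' q_n^{\tau}$ governs the small-divisor loss, and geometric convergence of the renormalized commuting pairs upgrades the conjugacy to $C^{k-\tau-\varepsilon}$. As a sketch this is fine; be aware, though, that the actual work in \cite{Teplinsky2009} lies in the precise cross-ratio estimates and the careful bookkeeping showing that exactly $\tau$ derivatives (plus $\varepsilon$) are lost rather than $2\tau$ or worse, which your third stage elides. For the purposes of this paper, simply citing the result as the authors do is entirely appropriate.
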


Note that under the assumptions of Theorem \ref{intro thm: PHAB}, the AB-system has zero central Lyapunov exponent, hence some iteration of it is strongly $k$-bunched and the central leaves are $C^k$-smooth \cite{HPS1977}.

\begin{proof}[Proof of Theorem \ref{intro thm: PHAB}]
    It suffices to solve the equation on $S^1$,
    $$w(t + \alpha) - w(t) = \psi(t) + c,$$
    where $\psi = \varphi\circ h^{-1}$ is $C^\gamma$-smooth for any $\gamma < \min\{r, k - \tau\}$ , and $w = u\circ h^{-1}$.

    Consider the Fourier series $\psi(t) = \sum_{l\in\mathbb{Z}}\widehat{\psi}_le^{2\pi it}$, and define
    $$\widehat{w}_l := \frac{\widehat{\psi}_l}{e^{2\pi il\alpha} - 1},\ l\in\mathbb{Z}\setminus\{0\}.$$
    Since $\min\{r, k - \tau\} > \tau$, by Lemma \ref{lem: small denominator}, the series
    $$w(t) := \sum_{l\in\mathbb{Z}\setminus\{0\}}\widehat{w}_le^{2\pi it}$$
    converges to a continuous solution, which is $C^\beta$-smooth for any $\beta < \min\{r - \tau, k - 2\tau\}$. By Lemma \ref{lem: regularity}, the extension $u: M_B \to \mathbb{R}$ is also $C^\beta$-smooth.
\end{proof}

\bibliographystyle{plain}

\flushleft{\bf Wenchao Li} \\
School of Mathematical Sciences, Peking University, Beijing, 100871, P. R. China\\
\textit{E-mail:} \texttt{lwc@pku.edu.cn}\\

\flushleft{\bf Yi Shi} \\
School of Mathematics, Sichuan University, Chengdu, 610065, P. R. China\\
\textit{E-mail:} \texttt{shiyi@scu.edu.cn}\\

\end{document}